\theoremstyle{plain}
\newtheorem{thm}{Theorem}%[section]
\newtheorem*{thmc}{Theorem}
\newtheorem{lem}{Lemma}[section]
\newtheorem{cor}[lem]{Corollary}
\newtheorem{prop}[lem]{Proposition}
\newtheorem{defn}[lem]{Definition}
\newtheorem{conj}[lem]{Conjecture}
\theoremstyle{definition}
\newtheorem{rem}[lem]{Remark}
\newtheorem{ex}[lem]{Example}
\let\ssection=\section
\renewcommand{\section}{\setcounter{equation}{0}\ssection}
\newcommand{\R}{\mathbb{R}}
\newcommand{\Z}{\mathbb{Z}}
\newcommand{\C}{\mathbb{C}}
\newcommand{\RP}{{\mathbb{RP}}}
\newcommand{\CP}{{\mathbb{CP}}}
\newcommand{\cM}{\mathcal{M}}
\newcommand{\id}{\textup{Id}}
\newcommand{\tr}{\textup{tr }}
\newcommand{\Id}{\mathrm{Id}}
\newcommand{\SL}{\mathrm{SL}}
\newcommand{\PSL}{\mathrm{PSL}}
\newcommand{\thup}{\textup{th}}
\def\g{\gamma}
\begin{document}

\title[Partitions of unity in $\SL(2,\mathbb Z)$]{Partitions of unity in $\SL(2,\mathbb Z)$,
negative continued fractions,
and dissections of polygons}

\author{Valentin Ovsienko}
\address{
Valentin Ovsienko,
Centre national de la recherche scientifique,
Laboratoire de Math\'ematiques 
U.F.R. Sciences Exactes et Naturelles 
Moulin de la Housse - BP 1039 
51687 Reims cedex 2,
France}
\email{valentin.ovsienko@univ-reims.fr}

%\keywords{}

%\subjclass{XXX}

\begin{abstract}
We characterize sequences of positive integers
$(a_1,a_2,\ldots,a_n)$
for which the $2\times2$ matrix $\left(
\begin{array}{cc}
a_n&-1\\[4pt]
1&0
\end{array}
\right)
\left(
\begin{array}{cc}
a_{n-1}&-1\\[4pt]
1&0
\end{array}
\right)
\cdots
\left(
\begin{array}{cc}
a_1&-1\\[4pt]
1&0
\end{array}
\right)
$
is either the identity matrix $\Id$, its negative $-\Id$, or square root of $-\Id$.
This extends a theorem of Conway and Coxeter that classifies such solutions
subject to a total positivity restriction.
\end{abstract}

\maketitle

\thispagestyle{empty}

%%%%%%%%%%%%%%%%%%%%
%%%%%%%%%%%%%%%%%%%%
\section{Introduction and main results}\label{IntSec}
%%%%%%%%%%%%%%%%%%%%
%%%%%%%%%%%%%%%%%%%%

Let $M_n(a_1,\ldots,a_n)\in\SL(2,\Z)$ be the matrix defined by the product
\begin{equation}
\label{SLEq}
M_n(a_1,\ldots,a_n):=
\left(
\begin{array}{cc}
a_n&-1\\[4pt]
1&0
\end{array}
\right)
\left(
\begin{array}{cc}
a_{n-1}&-1\\[4pt]
1&0
\end{array}
\right)
\cdots
\left(
\begin{array}{cc}
a_1&-1\\[4pt]
1&0
\end{array}
\right),
\end{equation}
where $(a_1,a_2,\ldots,a_n)$ are positive integers.
In terms of the generators of $\SL(2,\Z)$
$$
S=\left(
\begin{array}{cc}
0&-1\\[4pt]
1&0
\end{array}
\right),
\qquad
T=\left(
\begin{array}{cc}
1&1\\[4pt]
0&1
\end{array}
\right),
$$
the matrix~(\ref{SLEq}) reads:
$M_n(a_1,\ldots,a_n)=T^{a_n}S\,T^{a_{n-1}}S\cdots{}T^{a_1}S$.
Every matrix $A\in\SL(2,\Z)$ can be written in the form~(\ref{SLEq})
in many different ways.

The goal of this paper is to describe all solutions of the following three equations
$$
\begin{array}{rccl}
M_n(a_1,\ldots,a_n)&=&\Id,&\qquad\qquad\qquad\qquad(\hbox{Problem~I})\\[2pt]
M_n(a_1,\ldots,a_n)&=&-\Id,&\qquad\qquad\qquad\qquad(\hbox{Problem~II})\\[2pt]
M_n(a_1,\ldots,a_n)^2&=&-\Id.&\qquad\qquad\qquad\qquad(\hbox{Problem~III})
\end{array}
$$

Problem~II, with a certain total positivity restriction, 
was studied in~\cite{Cox,CoCo} under the name of ``frieze patterns''.
The theorem of Conway and Coxeter~\cite{CoCo} establishes a 
one-to-one correspondence between 
the solutions of Problem~II
such that $a_1+a_2+\cdots+a_n=3n-6$, and triangulations of $n$-gons.
This class of solutions will be called {\it totally positive}.
Coxeter implicitly formulated Problem~II in full generality,
when he considered frieze patterns with zero and negative entries; see~\cite{CR}.

The following observations are obvious.

(a) Cyclic invariance: if $(a_1,a_2\ldots,a_n)$ is a solution of one of the above problems,
then $(a_2,\ldots,a_n,a_1)$ is also a solution of the same problem.
It is thus often convenient to consider $n$-periodic infinite sequences
$(a_i)_{i\in\Z}$ with the cyclic order convention $a_{i+n}=a_i$.
Note however, that
although the property of being a solution of Problem~III
is cyclically invariant, in this case the matrix $M_n(a_1,\ldots,a_n)$ changes under
cyclic permutation of $(a_1,\ldots,a_n)$.

(b)
The ``doubling'' $(a_1,\ldots,a_n,a_1,\ldots,a_n)$
of a solution of Problem~II is a solution of Problem~I, 
and the ``doubling'' of a solution of Problem~III is a solution of Problem~II.

(c)
A particular feature of Problem~III (distinguishing it from Problems I and II)
 is that it is equivalent to a single equation
 $
 \tr{}M_n(a_1,\ldots,a_n)=0.
$
This Diophantine equation was considered in~\cite{CoOv}, 
where the totally positive solutions were classified.

%%%%%%%%%%%%%%%%%%%%
\subsection{The main result}
%%%%%%%%%%%%%%%%%%%%
We introduce the following combinatorial notion.

\begin{defn}
\label{QuiD}
(a)
We call a $3d$-{\it dissection} a partition of a convex $n$-gon
into sub-polygons by means of pairwise non-crossing diagonals, 
such that the number of vertices of every sub-polygon is a multiple of $3$.

(b)
The {\it quiddity} of a $3d$-dissection of an $n$-gon is the
(cyclically ordered) $n$-tuple of numbers
$(a_1,\ldots,a_n)$ such that $a_i$ is the number of sub-polygons adjacent to $i$-th vertex
of the $n$-gon.
\end{defn}

In other words, a $3d$-dissection splits an $n$-gon into
triangles, hexagons, nonagons, dodecagons, etc.
Classical triangulations are a very particular case of a $3d$-dissection.
The notion of quiddity
is similar to that of Conway and  Coxeter~\cite{CoCo}.

We will also consider {\it centrally symmetric} $3d$-dissection of $2n$-gons.
Quiddities of such dissections are $n$-periodic, i.e.,
are doubled $n$-tuples of positive integers:
$(a_1,\ldots,a_n,a_1,\ldots,a_n)$.
We call a {\it half-quiddity} any $n$-tuple of consecutive numbers
$(a_i,a_{i+1},\ldots,a_{i+n-1})$ in a $n$-periodic dissection of a $2n$-gon.

The following statement, proved in Section~\ref{CombSec}, is our main result.

\begin{thm}
\label{SecondMainThm}
(i) 
Every quiddity of a $3d$-dissection of an $n$-gon is a solution of Problem~I or Problem~II.
Conversely, every solution of Problem~I or~II is a quiddity of
a $3d$-dissection of an $n$-gon.

(ii) 
A half-quiddity of a centrally symmetric $3d$-dissection which is a solution of Problem~II
is a solution of Problem~III, and
every solution of Problem~III is a
half-quiddity of a centrally symmetric $3d$-dissection of a $2n$-gon.
\end{thm}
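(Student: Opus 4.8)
The plan is to reduce everything to a few matrix identities for the elementary factors. Write $R(a)=\left(\begin{smallmatrix}a&-1\\1&0\end{smallmatrix}\right)$, so that $M_n(a_1,\dots,a_n)=R(a_n)\cdots R(a_1)$, and set $L=R(1)$. A direct computation gives $L^3=-\Id$, and more generally the three identities
\[
L^3=-\Id,\qquad R(b)\,L^{\,3k-2}\,R(a)=(-1)^{k-1}R(b-1)\,R(a-1),\qquad R(b)\,R(0)\,R(a)=-R(a+b).
\]
The middle one (which follows from $L^{3k-2}=(-1)^{k-1}L$ and the case $k=1$) is the \emph{ear identity}: a block $(a,\underbrace{1,\dots,1}_{3k-2},b)$ in the sequence may be contracted to $(a-1,b-1)$ at the cost of a sign $(-1)^{k-1}$. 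The last is the \emph{zero-merge}: an entry $0$ flanked by $a,b$ may be deleted, replacing them by $a+b$, again at the cost of a sign.

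Forward direction of (i). I induct on the number $p$ of sub-polygons of a $3d$-dissection $D$ of an $n$-gon. If $p=1$ then $D$ is a single $3k$-gon, its quiddity is $(1,\dots,1)$ with $3k$ entries, and $M_{3k}(1,\dots,1)=L^{3k}=(-1)^k\Id$, a solution of Problem~I or~II. If $p\ge2$, the dual graph of $D$ (a vertex per sub-polygon, an edge per diagonal) is a tree, hence has a leaf; this leaf is an \emph{ear}, a $3k$-gon meeting the rest of $D$ along a single diagonal, whose $3k-2$ tips are consecutive vertices of quiddity $1$ flanked by two vertices of quiddity $\ge2$. Removing the ear yields a dissection $D'$ with $p-1$ sub-polygons whose quiddity is obtained from that of $D$ by the contraction of the ear identity; since that identity only multiplies $M_n$ by $\pm1$, the inductive hypothesis $M(D')=\pm\Id$ gives $M(D)=\pm\Id$. (Bookkeeping of the signs along the reduction in fact yields the explicit rule $M(D)=(-1)^{(n-p+1)/3}\Id$, distinguishing Problems~I and~II.)

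Converse of (i). Given a solution $(a_1,\dots,a_n)$ I reduce it to a single all-ones block. First, positivity and monotonicity of continuants for negative continued fractions show that if all $a_i\ge2$ then the $(1,1)$-entry of $M_n$ exceeds $1$, so $M_n\ne\pm\Id$; hence any solution has some $a_i=1$. If the sequence is not all ones, I pick an entry $a_i=1$ adjacent to an entry $\ge2$ and apply the triangle case $k=1$ of the ear identity; if a neighbour thereby drops to $0$, I clear it with the zero-merge. Each step strictly shortens the sequence and preserves the property of being a solution, so the process terminates at an all-ones sequence, which is a solution precisely when its length is a multiple of $3$, i.e.\ a single $3k$-gon. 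Reversing the steps glues $3k$-gons along edges and reconstructs a $3d$-dissection with the prescribed quiddity. \textbf{The main obstacle is here}: one must check that the reversed moves always assemble a \emph{bona fide} $3d$-dissection — equivalently, that at each stage a genuine ear (a maximal run of $1$'s of length $\equiv1\pmod3$) is available, the delicate case being several consecutive $1$'s, which is exactly what the zero-merge is designed to resolve.

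Part (ii). The engine is the identity $M_{2n}(a_1,\dots,a_n,a_1,\dots,a_n)=M_n(a_1,\dots,a_n)^2$, immediate from the product form of $M$. Thus $(a_1,\dots,a_n)$ solves Problem~III, i.e.\ $M_n^2=-\Id$, if and only if its doubling solves Problem~II. For the first assertion of (ii), a centrally symmetric $3d$-dissection of a $2n$-gon has $n$-periodic (doubled) quiddity; assuming this doubling solves Problem~II gives $M_n^2=M_{2n}=-\Id$, so the half-quiddity solves Problem~III. For the converse, a solution of Problem~III doubles to a solution of Problem~II, which by the converse of (i) is the quiddity of a $3d$-dissection of a $2n$-gon; I run the reduction of (i) \emph{antipodally}, peeling an ear together with its central image at each step, so that central symmetry is preserved throughout and the final dissection is centrally symmetric with the given half-quiddity. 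The obstacle specific to (ii) is the base case of this symmetric reduction — ensuring the terminal centrally symmetric configuration is itself a legitimate $3d$-dissection (a single central sub-polygon invariant under the symmetry, or two central pieces exchanged along a main diagonal), which is the combinatorial counterpart of the trace-zero condition studied in~\cite{CoOv}.
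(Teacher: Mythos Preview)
Your forward direction of (i) is correct and is essentially the paper's argument: an ``exterior sub-polygon'' is precisely a leaf of the dual tree, and the paper removes it by the inverse of its two surgery operations exactly as you remove an ear with your ear identity. So nothing to add there.

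The converse of (i), however, has a real gap, and your own description of the obstacle is not quite on target. Your reduction of a solution to an all-ones string is fine algebraically, but your claim that ``reversing the steps glues $3k$-gons along edges'' is incorrect: the reverse of a pure triangle-ear contraction does glue a triangle, but the reverse of the paired move (triangle-ear followed by zero-merge) does \emph{not} glue a new sub-polygon along an edge. Concretely, your combined step takes $(\ldots,a,1,1,d,\ldots)$ to $(\ldots,a-1,0,d,\ldots)$ to $(\ldots,a+d-1,\ldots)$; its reverse inserts two consecutive $1$'s while splitting one entry, and combinatorially this \emph{enlarges an existing sub-polygon by three vertices} rather than attaching a fresh $3k$-gon. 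Without identifying this second combinatorial meaning, your reversed process does not visibly assemble a $3d$-dissection, and the zero-merge by itself has no interpretation in the polygon at all.

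This is exactly what the paper supplies. It introduces two surgery operations,
\[
(\ldots,a_i,a_{i+1},\ldots)\mapsto(\ldots,a_i+1,1,a_{i+1}+1,\ldots)
\quad\text{and}\quad
(\ldots,a_i,\ldots)\mapsto(\ldots,a'_i,1,1,a''_i,\ldots)\ \ (a'_i+a''_i=a_i+1),
\]
shows that they preserve (respectively negate) $M_n$, and proves (its Theorem on the inductive construction) that every solution is reached from $(1,1,1)$ by a finite sequence of them. The first is your triangle-ear; the second is your triangle-ear-plus-zero-merge packaged as a single positive-integer move. Crucially, the paper gives the combinatorial meaning of the second move --- add three vertices to a chosen sub-polygon at a chosen vertex --- and this is what makes the reconstruction of the dissection immediate. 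Your proposal would be complete once you make this identification explicit; as written, the phrase ``glues $3k$-gons along edges'' is misleading and leaves the construction of the dissection unjustified.

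For (ii), the doubling reduction is the right idea and matches the paper, which simply says the proof is ``similar''. Your antipodal-peeling sketch is plausible, but inherits the same gap: when the antipodal ear removal produces zeros you again need the second surgery, now applied symmetrically at two antipodal vertices (equivalently, enlarging a centrally symmetric pair of sub-polygons, or a single central one, by three vertices each). Once that is said, the symmetric reduction terminates at a centrally symmetric all-ones $3k$-gon (the middle sub-polygon) or at a pair of $3k$-gons exchanged by the symmetry along a main diagonal, and the reconstruction goes through.
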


To distinguish between the solutions of Problems~I and~II
in Part (i) of the theorem, one needs to count the
total number of sub-polygons with even number of vertices
($6$-gons, $12$-gons,...) in the chosen $3d$-dissection.
If this number is {\it odd},  then the corresponding quiddity a solutions of Problem~I,
otherwise, it is a solutions of Problem~II.

In order to explain how to construct a solution
of Problems~I-III starting from $3d$-dissections we give here a simple example.

\begin{ex}
\label{FirstEx}
Consider the following dissection of a tetradecagon
($n=14$) into $4$ triangles and $2$ hexagons.
$$
\xymatrix @!0 @R=0.30cm @C=0.33cm
 {
&&&&&&3\ar@{-}[dddddddddddd]\ar@{-}[llld]\ar@{-}[rrrd]\ar@{-}[rrrddddddddddd]&
\\
&&&2\ar@{-}[lldd]\ar@{-}[rrrddddddddddd]&&&&&& 1\ar@{-}[rrdd]\\
\\
&1\ar@{-}[ldd]&&&&&&&&&&2\ar@{-}[rdd]\ar@{-}[rdddd]\\
\\
2\ar@{-}[dd]\ar@{-}[rdddd]&&&&&&&&&&&&1\ar@{-}[dd]\\
\\
1\ar@{-}[rdd]&&&&&&&&&&&&2\ar@{-}[ldd]\\
\\
&2\ar@{-}[rrdd]&&&&&&&&&&1\ar@{-}[lldd]\\
\\
&&&1\ar@{-}[rrrd]&&&&&& 2\ar@{-}[llld]\\
&&&&&&3&
}
$$
Label its vertices by the numbers of adjacent sub-polygons.
Reading these numbers (anti-clockwise) along the border of the tetradecagon, one obtains
a solution of Problem~II
$$
(a_1,\ldots,a_{14})=(3,2,1,2,1,2,1,3,2,1,3,1,2,1).
$$
Furthermore, every half-sequence, for instance
$(a_1,\ldots,a_{7})=(3,2,1,2,1,2,1)$ is a solution of Problem~III,
since the $3d$-dissection is centrally symmetric.
\end{ex}

To the best of our knowledge, $3d$-dissections have not been considered
in the literature.
Let us mention that, since the work of Conway and Coxeter,
triangulations of various geometric objects play important role
in the subject; see, e.g.,~\cite{BM,BPT}.
Higher angulations of $n$-gons have also been considered;
see~\cite{BHJ,HJ}.

%%%%%%%%%%%%%%%%%%%%
\subsection{The surgery operations}
%%%%%%%%%%%%%%%%%%%%
We also give an inductive procedure of construction of all the
solutions of Problems~I-III.
Consider the following two families of ``local surgery'' operations
on solutions of Problems~I-III.

\begin{enumerate}
\item[(a)]
The operations of the first type insert $1$ into the sequence $(a_1,a_2,\ldots,a_n)$, 
increasing the two neighboring entries by $1$:
\begin{equation}
\label{FirstSur}
(a_1,\ldots,a_i,a_{i+1},\ldots,a_n)
\mapsto
(a_1,\ldots,a_i+1,\,1,\,a_{i+1}+1,\ldots,a_n).
\end{equation}
Within the cyclic ordering of $a_i$, the operation is defined for all
$1\leq{}i\leq{}n$.
The operations~(\ref{FirstSur}) preserve the set of solutions of 
each of the above problems.

\item[(b)]
The operations of the second type break one entry, $a_i$, replacing it by
$a'_i,a''_i\in\Z_{>0}$ such that
$$
a'_i+a''_i=a_i+1,
$$ 
and insert two consecutive $1$'s between them:
\begin{equation}
\label{SecSur}
(a_1,\ldots,a_i,\ldots,a_n)
\mapsto
(a_1,\ldots,a'_i,\,1,\,\,1,\,a''_i,\ldots,a_n).
\end{equation}
The operations~(\ref{SecSur}) exchange the sets of solutions of Problems~I and~II, 
and preserve the set of solutions of Problem~III.
\end{enumerate}

The crucial difference between these two classes of operations is that
every operation~(\ref{FirstSur}) increases the number of sub-polygons
of a $3d$-dissection by $1$, while an operation~(\ref{SecSur}) keeps this number
unchanged.
Indeed, an operation~(\ref{FirstSur}) consists in a gluing an extra
``exterior'' triangle, while an operation~(\ref{SecSur})
selects one sub-polygon and increases the number of its
vertices by $3$.
For more details, see Section~\ref{CombSec}.

Note that
the operations~(\ref{FirstSur}) are very well known.
They were used by Conway and Coxeter~\cite{CoCo};
see also~\cite{CH,BR} and many other sources.
In particular, the totally positive solutions of Problem~II are precisely the solutions obtained by 
a sequence of operations~(\ref{FirstSur}); see Appendix.
The operations~(\ref{SecSur}) seem to be new.
They change the combinatorial nature of solutions
(from triangulations to $3d$-dissections),
they also have a geometric meaning in terms of the homotopy class
of a curve on the projective line; see Section~\ref{RotSec}.

For a given $n$, there are exactly $n$ different operations of type~(\ref{FirstSur}),
while the total number of different operations of type~(\ref{SecSur})
is equal to $a_1+\cdots+a_n$.
Every operation~(\ref{FirstSur}) transforms~$n$ into~$n+1$, 
while every operation~(\ref{SecSur}) transforms~$n$ into~$n+3$.

The following statement, proved in Section~\ref{AlgoSec},
is an ``algorithmic version'' of Theorem~\ref{SecondMainThm}.

\begin{thm}
\label{TheMainThm}
If $n=3$, then  Problem~II has a unique solution:
\begin{equation}
\label{ElSolTwo}
(a_1,a_2,a_3)=(1,1,1),
\end{equation}
and  every solution of Problem I (resp. II)
can be obtained from~(\ref{ElSolTwo}) 
by a sequence of the operations~(\ref{FirstSur}) and~(\ref{SecSur}),
such that the total number of operations~(\ref{SecSur}) is odd (resp. even).
Conversely, every sequence of operations~(\ref{FirstSur}) and~(\ref{SecSur}) 
 applied to~(\ref{ElSolTwo}) produces a solution of Problem I or II.
\end{thm}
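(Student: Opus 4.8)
The plan is to prove the two directions separately: the forward (constructive) direction by two matrix identities, and the backward (exhaustiveness) direction by a combinatorial reduction that leans on Theorem~\ref{SecondMainThm}. Throughout, write $R(a)=\left(\begin{smallmatrix} a & -1\\ 1 & 0\end{smallmatrix}\right)$, so that $M_n(a_1,\dots,a_n)=R(a_n)\cdots R(a_1)$. First I would record the base case by direct computation: $R(1)^3=-\Id$, so $(1,1,1)$ solves Problem~II, and since a $3$-gon admits no diagonals its only $3d$-dissection is the triangle itself; by Theorem~\ref{SecondMainThm} this gives both the uniqueness of $(1,1,1)$ for Problem~II at $n=3$ and the absence of any $n=3$ solution of Problem~I.

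For the forward direction I would verify the two local identities governing the surgeries. A short multiplication gives $R(b+1)\,R(1)\,R(a+1)=R(b)\,R(a)$, which shows that an operation~(\ref{FirstSur}) leaves the product $M_n$ unchanged; a second computation gives $R(a'')\,R(1)\,R(1)\,R(a')=-R(a_i)$ whenever $a'+a''=a_i+1$, so an operation~(\ref{SecSur}) multiplies $M_n$ by $-1$. Consequently, if a sequence of surgeries applied to $(1,1,1)$ uses exactly $k$ operations of type~(\ref{SecSur}), the resulting matrix equals $(-1)^k(-\Id)=(-1)^{k+1}\Id$; hence the output solves Problem~I when $k$ is odd and Problem~II when $k$ is even. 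This already proves the converse statement ("every sequence produces a solution") together with the asserted parity.

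The substance of the theorem is the backward direction. By Theorem~\ref{SecondMainThm} it suffices to show that every $3d$-dissection of an $n$-gon is obtained from the single triangle by the geometric moves dual to~(\ref{FirstSur}) and~(\ref{SecSur}), namely gluing an exterior triangle and enlarging one subpolygon by three vertices. I would argue by induction on $n$, exhibiting an applicable inverse move at each step. If the dissection consists of a single subpolygon then $n=3k$ with $k\geq2$ and an inverse~(\ref{SecSur}) contracts it to a $(3k-3)$-gon. Otherwise the dual graph (subpolygons as nodes, shared diagonals as edges) is a tree with at least two leaves; a leaf subpolygon $P$ meets exactly one diagonal, so all of its vertices save the two endpoints of that diagonal have quiddity~$1$ and are consecutive along the boundary. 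If $P$ is a triangle this is a single vertex, an ear, and an inverse~(\ref{FirstSur}) removes it ($n\mapsto n-1$); if $P$ has at least six vertices there are at least four such consecutive quiddity-$1$ vertices, and an inverse~(\ref{SecSur}) applied to two adjacent ones contracts $P$ by three vertices ($n\mapsto n-3$). In every case the result is a valid $3d$-dissection of a smaller polygon, so the induction closes; reversing the reductions expresses the given dissection, hence the given solution, through operations~(\ref{FirstSur}) and~(\ref{SecSur}) starting from $(1,1,1)$, with the parity of the number of~(\ref{SecSur}) moves forced by the forward computation.

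The main obstacle is exactly this reduction lemma: one must guarantee that \emph{some} inverse surgery is always available and that it returns a genuine $3d$-dissection, so that the induction hypothesis applies. The leaf analysis of the dual tree is what secures this, the key points being that every leaf is either a triangle (supplying an ear for an inverse~(\ref{FirstSur})) or a polygon with at least six vertices (supplying two adjacent quiddity-$1$ vertices for an inverse~(\ref{SecSur})), and that contracting a $\geq 6$-gon leaf by three vertices sends a $3k$-gon to a $3(k-1)$-gon, leaving every subpolygon size divisible by $3$. By contrast, the matrix identities and the parity bookkeeping are routine.
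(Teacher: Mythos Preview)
Your argument contains a genuine circularity. You invoke Theorem~\ref{SecondMainThm} twice --- once for the uniqueness at $n=3$, and again to pass from an arbitrary solution to a $3d$-dissection --- but in the paper Theorem~\ref{SecondMainThm} is \emph{deduced from} Theorem~\ref{TheMainThm} (see the opening line of Section~\ref{CombSec}). The direction of Theorem~\ref{SecondMainThm} you need, namely that every solution of Problem~I or~II is the quiddity of some $3d$-dissection, is exactly the nontrivial one, and the paper proves it by first establishing Theorem~\ref{TheMainThm} and then interpreting each surgery geometrically. So your backward direction assumes what is to be proved.

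The paper's proof avoids dissections entirely. The key ingredient you are missing is Lemma~\ref{OneLem}: for any solution $(a_1,\dots,a_n)$ of Problem~I, II, or~III, some $a_i$ equals~$1$. This is shown by a growth argument for the difference equation~(\ref{DEqEq}): if all $a_i\ge 2$ then the solution with initial data $(V_0,V_1)=(0,1)$ is strictly increasing, contradicting (anti)periodicity. With this lemma in hand the reduction is purely on the sequence: if $a_i=1$ with both neighbours $\ge 2$, apply the inverse of~(\ref{FirstSur}); if $a_i=a_{i+1}=1$, apply the inverse of~(\ref{SecSur}). Your matrix identities then show the reduced tuple is again a solution of the appropriate problem, and the induction closes at $n=3$ by the direct computation you gave. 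Your dual-tree leaf analysis is perfectly sound combinatorics, but it lives on the dissection side and belongs in the proof of Theorem~\ref{SecondMainThm} (where indeed a version of it appears), not here.
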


Note that,
unlike Problems~I and~II, a solution of
Problem~III can be reduced, i.e., such that it cannot be simplified
by applying the inverse of the operations~(\ref{FirstSur}) and~(\ref{SecSur}).
The simplest examples of a reduced solutions are $(1,2),\,(2,1)$,
for~$n=2$ and $(1,1,2,1,1)$, for~$n=5$.
One has
 \begin{equation}
\label{RelSL}
M_5(1,1,2,1,1)=\left(
\begin{array}{cc}
0&1\\[4pt]
-1&0
\end{array}
\right).
\end{equation}

%%%%%%%%%%%%%%%%%%%%
\subsection{Motivations}
%%%%%%%%%%%%%%%%%%%%
Matrices~(\ref{SLEq}) are ubiquitous,
they appear in many problems of number theory,
algebra, dynamics, mathematical physics, etc.
The following topics are motivated our study,
these topics and their relationship with
Problems~I--III deserve further study.

\begin{enumerate}

\item[(a)]
Consider the linear equation
\begin{equation}
\label{DEqEq}
V_{i-1}-a_iV_{i}+V_{i+1}=0,
\end{equation}
with (known) coefficients $(a_i)_{i\in\Z}$ and (indeterminate) sequence
$(V_i)_{i\in\Z}$.
It is often called the discrete Sturm-Liouville, Hill, or Schr\"odinger equation.
There is a one-to-one correspondence between
solutions of Problem~I (resp. II) 
and equations~(\ref{DEqEq}) with positive integer
$n$-periodic coefficients $a_i$, such that
every solution $(V_i)_{i\in\Z}$ of the equation is periodic (resp. antiperiodic): 
$$
V_{i+n}=V_i  \quad\hbox{ (resp. $V_{i+n}=-V_i$)},
$$ 
for all~$i$.
Indeed, one has:
$$
\begin{bmatrix}
V_{n+1}\\[4pt]
V_{n}
\end{bmatrix}
=M_n(a_1,\ldots,a_{n})
\begin{bmatrix}
V_{1}\\[4pt]
V_{0}
\end{bmatrix}.
$$
In this language, the totally positive solutions of Conway and Coxeter
correspond to non-oscillating equations~(\ref{DEqEq});
see Section~\ref{RotSec}.
Note also that
the matrix $M_n(a_1,\ldots,a_n)$ is called the monodromy matrix
of the equation~(\ref{DEqEq}).
It plays an import an role in
the theory of integrable systems; see~\cite{Skl}.

\item[(b)]
The theory of negative continued fractions
$$
[a_1,a_2,\ldots,a_n]=
a_1 - \cfrac{1}{a_2 
          - \cfrac{1}{\ddots - \cfrac{1}{a_n} } } 
$$
is relevant for the subject of this paper,
although in this theory one usually considers $a_i\geq2$
and the matrix $M_n(a_1,\ldots,a_n)$ is hyperbolic.
Some ideas of the theory have found application to Farey sequences;
see~\cite{Zag,HS,SVS,HJ} and the Appendix.

\item[(c)]
The classical moduli space
$$
\cM_{0,n} := \{(v_1,...,v_n) \in\CP^1\,\vert\,v_i\not= v_{i+1}\}\,/\,\PSL(2,\C)
$$
 of configurations of $n$ points in $\CP^1$.
As a $(n-3)$-dimensional algebraic variety it can be described by:
$$
\cM_{0,n}\simeq
\left\{
(a_1,\ldots,a_{n})\in\C^n\,\vert\,
M_n(a_1,\ldots,a_{n})=-\id
\right\}.
$$
For instance, for~$n=5$ the moduli space of configurations of $5$ points
$(v_1,v_2,v_3,v_4,v_5)$ can be described by $5$ cross-ratios:
$$
a_i:=\frac{(v_{i+1}-v_{i+4})(v_{i+2}-v_{i+3})}{(v_{i+1}-v_{i+2})(v_{i+2}-v_{i+3})},
$$
that satisfy the equation $M_5(a_1,a_2,a_3,a_4,a_5)=-\Id$.
For more details; see~\cite{SVS1,SVRS,Sop}.
Theorem~\ref{SecondMainThm} provides a set of rational points of~$\cM_{0,n}$;
see Section~\ref{RotSec} for a construction of the element of~$\cM_{0,n}$
associated with a solution of Problem~I or~II.

\item[(d)]
Combinatorics of Coxeter's frieze patterns~\cite{Cox,CoCo}.
Although this is not the main subject of the paper,
we outline in Section~\ref{CoCoSec} the class of Coxeter's friezes
corresponding to arbitrary solutions of Problems~II and~III.
Note also that classical Farey sequences can be understood as
very particular cases of Coxeter friezes; see~\cite{Cox} (and also~\cite{SVS}).
In particular, the index of a Farey sequence defined in~\cite{HS},
is a totally positive solution of Problem~II.
Coxeter's friezes is an active area of research; see~\cite{Sop} 
and references therein.

\item[(e)]
Every element of $\SL(2,\Z)$ can be 
written in the form~(\ref{SLEq}) for some positive integers $(a_1,\ldots,a_n)$
which is an interesting characteristic.
 We conjecture in Section~\ref{GeneratSec} 
that there is a canonical way to associate a $3d$-dissection
to every element of the group $\PSL(2,\Z)$.

\end{enumerate}

%%%%%%%%%%%%%%%%%%%%
\subsection{Enumeration}
%%%%%%%%%%%%%%%%%%%%
We formulate the problem of enumeration
of solutions of Problems~I--III.
Counting the number of $3d$-dissections
of an $n$-gon can give the upper bound.
Note that the totally positive solutions of Problem~II
are enumerated by triangulations of $n$-gons,
so that the total number of solutions is given by the Catalan numbers.
This follows from the Conway and Coxeter theorem
and the fact that a triangulation is determined by its quiddity.
We refer to~\cite{FN} for a general theorem on enumeration of polygon dissections.
However, since a $3d$-dissection is not completely characterized by its quiddity
(cf. Section~\ref{NUSec}), there are more dissections than solutions.
For a first enumeration test
for the set of solutions of Problem~III,
see Section~\ref{ExIIISn}.

%%%%%%%%%%%%%%%%%%%%
%%%%%%%%%%%%%%%%%%%%
\section{Proof of Theorem~\ref{TheMainThm}}\label{AlgoSec}
%%%%%%%%%%%%%%%%%%%%
%%%%%%%%%%%%%%%%%%%%

In this section we prove Theorem~\ref{TheMainThm} and give some of its easy corollaries.

%%%%%%%%%%%%%%%%%%%%
\subsection{Induction basis}
%%%%%%%%%%%%%%%%%%%%
Let us first consider the simplest cases.

a) If $n=2$, then the matrix $M_2(a_1,a_2)$ is as follows:
$$
\left(
\begin{array}{cc}
a_2&-1\\[4pt]
1&0
\end{array}
\right)
\left(
\begin{array}{cc}
a_1&-1\\[4pt]
1&0
\end{array}
\right)=
\left(
\begin{array}{cc}
a_1a_2-1&-a_2\\[4pt]
a_1&-1
\end{array}
\right),
$$
with $a_1,a_2>0$.
Since this matrix cannot be $\pm\Id$,
Problems~I and~II have no solutions.

b) Consider the case $n=3$ and assume that the sequence $(a_1,a_2,a_3)$ contains
two consecutive $1$'s.
Set $(a_1,a_2,a_3)=(a,1,1)$.
The matrix $M_3(a_1,a_2,a_3)$ is then given by
$$
\left(
\begin{array}{cc}
a&-1\\[4pt]
1&0
\end{array}
\right)
\left(
\begin{array}{cc}
1&-1\\[4pt]
1&0
\end{array}
\right)
\left(
\begin{array}{cc}
1&-1\\[4pt]
1&0
\end{array}
\right)
=
\left(
\begin{array}{cc}
-1&1-a\\[4pt]
0&-1
\end{array}
\right).
$$
Hence Problem~II has one solution $(1,1,1)$, corresponding to $a=1$,
while Problems~I has no solutions.

%%%%%%%%%%%%%%%%%%%%
\subsection{Surgery operations on matrices}\label{SurSec}
%%%%%%%%%%%%%%%%%%%%

Let us analyze how the operations~(\ref{FirstSur}) and~(\ref{SecSur})
act on the matrix~(\ref{SLEq}).
This is just an elementary computation.

An operation~(\ref{FirstSur}) replaces the product of two elementary matrices
$$
\left(
\begin{array}{cc}
a_{i+1}&-1\\[4pt]
1&0
\end{array}
\right)
\left(
\begin{array}{cc}
a_{i}&-1\\[4pt]
1&0
\end{array}
\right)
$$
in the expression for $M_n(a_1,\ldots,a_n)$ by
\begin{equation}
\label{Casea}
\begin{array}{rcl}
\left(
\begin{array}{cc}
a_{i+1}+1&-1\\[4pt]
1&0
\end{array}
\right)
\left(
\begin{array}{cc}
1&-1\\[4pt]
1&0
\end{array}
\right)
\left(
\begin{array}{cc}
a_{i}+1&-1\\[4pt]
1&0
\end{array}
\right)&=&
\left(
\begin{array}{cc}
a_ia_{i+1}-1&-a_{i}\\[4pt]
a_{i+1}&-1
\end{array}
\right)\\[15pt]
&=&
\left(
\begin{array}{cc}
a_{i+1}&-1\\[4pt]
1&0
\end{array}
\right)
\left(
\begin{array}{cc}
a_{i}&-1\\[4pt]
1&0
\end{array}
\right).
\end{array}
\end{equation}
Therefore, an operation~(\ref{FirstSur}) does not change the matrix:
$$
M_{n+1}(a_1,\ldots,a_i+1,\,1\,,a_{i+1}+1,\ldots,a_n)=M_n(a_1,\ldots,a_n).
$$
It follows that the operations~(\ref{FirstSur})
preserve the sets of solutions of Problems~I-III.

Consider now an operation~(\ref{SecSur}).
Since
\begin{equation}
\label{Caseb}
\left(
\begin{array}{cc}
a''_i&-1\\[4pt]
1&0
\end{array}
\right)
\left(
\begin{array}{cc}
1&-1\\[4pt]
1&0
\end{array}
\right)
\left(
\begin{array}{cc}
1&-1\\[4pt]
1&0
\end{array}
\right)
\left(
\begin{array}{cc}
a'_i&-1\\[4pt]
1&0
\end{array}
\right)=
\left(
\begin{array}{cc}
1-a'_i-a''_i&1\\[4pt]
-1&0
\end{array}
\right)=
-\left(
\begin{array}{cc}
a_i&-1\\[4pt]
1&0
\end{array}
\right),
\end{equation}
the matrix $M_n(a_1,\ldots,a_n)$ changes its sign.
If the number of the operations~(\ref{SecSur}) is even, then
the sequence of operations also preserves the set of solutions of Problems~I and~II.

%%%%%%%%%%%%%%%%%%%%
\subsection{Induction step}
%%%%%%%%%%%%%%%%%%%%

We need the following lemma, which was essentially proved in~\cite{CoCo}
for Problem~II.

\begin{lem}
\label{OneLem}
Given a solution $(a_1,\ldots,a_n)$ of Problem~I, II, or~III, 
there exists at least one value of $1\leq{}i\leq{}n$, such that $a_i=1$.
\end{lem}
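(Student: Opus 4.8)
The plan is to prove the contrapositive: if $a_i\ge 2$ for every $i$, then $(a_1,\ldots,a_n)$ solves none of the three problems. The main tool will be the linear recurrence of motivation~(a), namely $V_{i+1}=a_iV_i-V_{i-1}$, together with the identity $\begin{bmatrix}V_{n+1}\\ V_n\end{bmatrix}=M_n(a_1,\ldots,a_n)\begin{bmatrix}V_1\\ V_0\end{bmatrix}$ recorded there. First I would fix the particular solution of the recurrence determined by the initial data $V_0=0$, $V_1=1$, i.e.\ by the vector $\begin{bmatrix}V_1\\ V_0\end{bmatrix}=\begin{bmatrix}1\\ 0\end{bmatrix}$.

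For this solution I would show, by induction on $i\ge 1$, that the sequence is strictly increasing and positive. Assuming $V_i>V_{i-1}\ge 0$ with $V_i>0$, the hypothesis $a_i\ge 2$ gives $V_{i+1}=a_iV_i-V_{i-1}\ge 2V_i-V_{i-1}=V_i+(V_i-V_{i-1})>V_i>0$. The base case $V_1=1>0=V_0$ is immediate, so $V_n\ge V_1=1>0$ for every $n\ge 1$. (Equivalently, this is the classical fact that the relevant negative-continued-fraction continuants $K(a_1,\ldots,a_k)$ are strictly increasing and positive as soon as all $a_i\ge 2$, which is precisely why $M_n$ is hyperbolic in that regime.)

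Now the three problems are dispatched quickly. For Problems~I and~II together: if $M_n(a_1,\ldots,a_n)=\pm\Id$, then applying this matrix to $\begin{bmatrix}1\\ 0\end{bmatrix}$ forces $V_n=\pm V_0=0$, contradicting $V_n\ge 1$. For Problem~III I would invoke the doubling observation~(b): since the product defining $M_{2n}$ on the doubled sequence factors as $M_{2n}(a_1,\ldots,a_n,a_1,\ldots,a_n)=M_n(a_1,\ldots,a_n)^2$, the equation $M_n^2=-\Id$ says exactly that the doubled sequence solves Problem~II. But the doubled sequence again has all entries $\ge 2$, so it is excluded by the step just completed. Hence in all three cases some $a_i$ must equal $1$.

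The argument is essentially routine; the only points requiring care are the sign bookkeeping in the matrix–vector identity (so that $M_n=\pm\Id$ genuinely forces $V_n=0$ for the chosen initial data) and the reduction of Problem~III to Problem~II via doubling. Neither is a real obstacle, which is consistent with the lemma being stated merely as the preparatory input for the induction step that follows.
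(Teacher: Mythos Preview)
Your proof is correct and follows essentially the same approach as the paper: both take the contrapositive, run the recurrence $V_{i+1}=a_iV_i-V_{i-1}$ from $(V_0,V_1)=(0,1)$, use $a_i\ge 2$ to force strict growth, and then derive a contradiction from the monodromy relation, handling Problem~III by doubling. Your write-up is in fact slightly more explicit in extracting the contradiction (reading off $V_n=0$ directly from $M_n=\pm\Id$ applied to $(1,0)^t$) and slightly more efficient for Problem~III (one doubling to Problem~II rather than two doublings to Problem~I), but these are cosmetic differences.
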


\begin{proof}
Assume that $a_i\geq2$ for all $i$, and consider the solution $(V_i)_{i\in\Z}$
of the equation~(\ref{DEqEq}) with initial conditions $(V_0,V_1)=(0,1)$.
Since $V_{i+1}=a_iV_i-V_{i-1}$, we see by induction that $V_{i+1}>V_i$
for all $i$.
Therefore, the solution $(V_i)_{i\in\Z}$ grows and cannot be periodic.

The matrix $M_n(a_1,\ldots,a_n)$ is the monodromy matrix
of~(\ref{DEqEq}).
More precisely, let $(V_i)_{i\in\Z}$ be a solution
of the equation~(\ref{DEqEq}).
Then for the vector $(V_{i+1},V_{i})^t$, we have
$$
\begin{bmatrix}
V_{i+1}\\[4pt]
V_{i}
\end{bmatrix}
=\left(
\begin{array}{cc}
a_i&-1\\[4pt]
1&0
\end{array}
\right)
\begin{bmatrix}
V_{i}\\[4pt]
V_{i-1}
\end{bmatrix},
\qquad\ldots,\qquad
\begin{bmatrix}
V_{i+n}\\[4pt]
V_{i+n-1}
\end{bmatrix}
=M_n(a_i,\ldots,a_{i+n})
\begin{bmatrix}
V_{i}\\[4pt]
V_{i-1}
\end{bmatrix}.
$$
Suppose first that $M_n(a_1,\ldots,a_n)=\Id$.
Then every solution of~(\ref{DEqEq}) must be periodic, which is a contradiction.

If now $M_n(a_1,\ldots,a_n)=-\Id$ or $M_n(a_1,\ldots,a_n)^2=-\Id$, then we can
use the doubling argument to conclude that every solution of~(\ref{DEqEq})
must be $2n$-periodic or $4n$-periodic, respectively.
\end{proof}

We are ready to prove that every solution of Problems~I and~II
can be obtained from the elementary solution~(\ref{ElSolTwo})
by a sequence of the operations~(\ref{FirstSur}) and~(\ref{SecSur}).

Given a solution $(a_1,\ldots,a_n)$,
by Lemma~\ref{OneLem} there exists at least one coefficient $a_i$ which is equal to~$1$.
There are then two possibilities:

(a) both $a_{i-1},a_{i+1}\geq2$; 

(b) there are two consecutive $1$'s, say $a_i=a_{i+1}=1$,
i.e., the chosen solution has the following ``fragment'': $(\ldots,a_{i-1},1,1,a_{i+2},\ldots)$.

In the case (a), consider the $(n-1)$-tuple
$$
(a_1,\ldots,a_{i-2},\,a_{i-1}-1,\,a_{i+1}-1,\,a_{i+2},\ldots,a_n).
$$
Clearly, the solution $(a_1,\ldots,a_n)$ can be obtained from this
$(n-1)$-tuple by an operation~(\ref{FirstSur}).
Equation~(\ref{Casea}) implies that the matrix
$M_{n-1}(a_1,\ldots,a_{i-2},\,a_{i-1}-1,\,a_{i+1}-1,\,a_{i+2},\ldots,a_n)$
remains equal to $M_n(a_1,\ldots,a_n)$.

In the case (b), take the $(n-3)$-tuple
$$
(a_1,\ldots,a_{i-2},\,a_{i-1}+a_{i+2}-1,\,a_{i+3},\ldots,a_n).
$$
The solution $(a_1,\ldots,a_n)$ is then a result of the operation~(\ref{SecSur})
applied to the coefficient $a_{i-1}+a_{i+2}-1$.
Equation~(\ref{Caseb}) implies that 
$M_{n-3}(a_1,\ldots,a_{i-2},\,a_{i-1}+a_{i+2}-1,\,a_{i+3},\ldots,a_n)
=M_n(a_1,\ldots,a_n)$.

The above inverse operations~(\ref{FirstSur}) and~(\ref{SecSur})
can always be applied, unless
$n=2$, or unless $n=3$ and there are at least two consecutive $1$'s.

Theorem~\ref{TheMainThm} is proved.
\hfill{$\Box$}

%%%%%%%%%%%%%%%%%%%%
\subsection{Simple corollaries}
%%%%%%%%%%%%%%%%%%%%

An immediate consequence of Theorem~\ref{TheMainThm} is the following
upper bound for the coefficients.

\begin{cor}
\label{SimpleCor}
If $(a_1,a_2,\ldots,a_n)$ is a solution of one of Problems~I, II, or~III, then

(i) $a_i\leq{}n-5$ (Problem~I); 

(ii) $a_i\leq{}n-2$ (Problem~II);

(iii) $a_i\leq{}n$ (Problem~III).
\end{cor}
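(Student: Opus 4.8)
The plan is to read all three bounds off the inductive construction in Theorem~\ref{TheMainThm}, by tracking how the largest entry $\max_i a_i$ compares with the length $n$ along a sequence of surgery operations. First I would set up a \emph{combined} induction for Problems~I and~II simultaneously --- combined because the operation~(\ref{SecSur}) swaps the two problems, so neither bound can be proved in isolation. The induction is on the number $m$ of operations used to reach $(a_1,\ldots,a_n)$ from the root $(1,1,1)$, and the statement I would carry is: a solution of Problem~II satisfies $a_i\le n-2$, and a solution of Problem~I satisfies $a_i\le n-5$. The base case $m=0$ is $(1,1,1)$, where $n=3$ and $a_i=1=n-2$.

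For the inductive step I would examine the last operation. An operation~(\ref{FirstSur}) raises $n$ by $1$ and preserves the problem type: the inserted entry is $1$, the two incremented neighbours rise by $1$ from their previous values, and every other entry is unchanged, so the bound $n'-2\mapsto n-2$ (resp.\ $n'-5\mapsto n-5$) is preserved. An operation~(\ref{SecSur}) raises $n$ by $3$ and switches the problem type, so here the induction hypothesis for the \emph{other} problem is used: a Problem~II target comes from a Problem~I source with $a_i\le n'-5=n-8$, and the split $a_i'+a_i''=a_i+1$ forces each piece to be $\le n-8\le n-2$; symmetrically a Problem~I target comes from a Problem~II source with $a_i\le n'-2=n-5$, and each piece is then $\le n-5$, while the two inserted $1$'s and the unchanged entries are harmless. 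This closes the induction and gives~(i) and~(ii).

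For~(iii) I would run the same bookkeeping, now for the operations acting on solutions of Problem~III, which they preserve. The invariant to propagate is $a_i\le n$: an operation~(\ref{FirstSur}) sends $n'=n-1$ to $n$ and an incremented neighbour to at most $(n-1)+1=n$, while an operation~(\ref{SecSur}) sends $n'=n-3$ to $n$ and each split piece to at most $n-3$, so $a_i\le n$ survives both. The delicate point --- and the step I expect to be the main obstacle --- is the base of this induction: unlike Problems~I and~II there is no single root $(1,1,1)$, and one must instead control the \emph{reduced} solutions of Problem~III (those to which no inverse operation applies). By Lemma~\ref{OneLem} every solution has an entry equal to $1$, which lets one keep applying inverse operations and descend to these reduced solutions; I would then need to verify that each of them already satisfies $a_i\le n$. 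For the minimal ones this is immediate --- $(1,2)$ and $(2,1)$ have $a_i=2=n$, and $(1,1,2,1,1)$ has $a_i=2\le 5$ --- but turning this into a clean base of the induction requires either a description of all reduced solutions or, alternatively, replacing this step by the doubling of observation~(b) together with the centrally symmetric $3d$-dissection of Theorem~\ref{SecondMainThm}, bounding the number of diagonals issuing from one vertex of the $2n$-gon by $n-1$.
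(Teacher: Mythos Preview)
Your treatment of (i) and (ii) is correct and is exactly the paper's argument: the two-sentence proof there just records that~(\ref{SecSur}) never increases any entry while~(\ref{FirstSur}) raises $n$ and two entries by~$1$ each, so $n-\max_i a_i$ never decreases and gains at least~$3$ at every~(\ref{SecSur}); your combined induction simply spells out the parity bookkeeping that the paper leaves implicit.

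For (iii) you are right that Theorem~\ref{TheMainThm} provides no root for Problem~III and that the paper's proof is terse on this point, but the obstacle you anticipate dissolves. The reduction step in the proof of Theorem~\ref{TheMainThm} applies verbatim to Problem~III, since both surgeries preserve it, and for $n\ge 3$ the dichotomy there always yields a \emph{cyclic} inverse operation: by Lemma~\ref{OneLem} some $a_i=1$, and either both neighbours are $\ge 2$ (inverse of~(\ref{FirstSur})) or a neighbour is also~$1$ (inverse of~(\ref{SecSur}), well-defined cyclically once $n\ge 4$; for $n=3$ one checks directly that the only solutions are the cyclic shifts of $(1,2,3)$, which fall under the first case). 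Hence every solution of Problem~III descends to $n=2$, where $\max a_i=2=n$, and your forward tracking finishes the job. The example $(1,1,2,1,1)$ is ``reduced'' only in the non-cyclic sense and does reduce cyclically to $(2,1)$, so no enumeration of further base cases is needed. Your dissection alternative would also succeed, but the asserted bound of $n-1$ diagonals at a vertex of a centrally symmetric $2n$-gon is itself a lemma requiring proof (pair each diagonal with its central image and argue non-crossing), so the first route is the more economical one.
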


\begin{proof}
The operations~(\ref{SecSur}) cannot increase the values of the coefficients~$a_i$,
while the operations~(\ref{FirstSur}) simultaneously increase $n$ and two coefficients by $1$.
\end{proof}

The next corollary gives expressions for the total sum of the coefficients.

\begin{cor}
\label{SumCor}
(i) If $(a_1,a_2,\ldots,a_n)$ is a solution of one of Problems~I or~II
obtained from the initial solution $(a_1,a_2,a_3)=(1,1,1)$ by applying a sequence
of~$S$ operations~(\ref{FirstSur}) and $R$ operations~(\ref{SecSur}), 
then
\begin{equation}
\label{SumPbII}
\begin{array}{rcl}
a_1+a_2+\cdots+a_n
&=&3S+3R+3\\[2pt]
&=&
3n-6R-6.
\end{array}
\end{equation}

(ii) If $(a_1,a_2,\ldots,a_n)$ is a solution of Problem~III
obtained from one of the initial solutions $(a_1,a_2)=(2,1)$ or $(1,2)$ by applying a sequence
of~$S$ operations~(\ref{FirstSur}) and $R$ operations~(\ref{SecSur}), 
then
\begin{equation}
\label{SumPbI}
\begin{array}{rcl}
a_1+a_2+\cdots+a_n
&=&3S+3R+3\\[2pt]
&=&
3n-6R-3.
\end{array}
\end{equation}
\end{cor}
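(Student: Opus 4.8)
The plan is to exploit the fact that each of the two surgery operations has a completely predictable effect on the two relevant quantities: the total sum $a_1+\cdots+a_n$ and the length $n$. The hypothesis already presents the solution as the result of applying $S$ operations~(\ref{FirstSur}) and $R$ operations~(\ref{SecSur}) to a fixed base case (this reachability being guaranteed in general by Theorem~\ref{TheMainThm}), so it suffices to see how these two invariants evolve under a single operation and then sum the contributions.

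First I would record the effect on the sum. An operation~(\ref{FirstSur}) inserts a new entry equal to $1$ and raises each of its two neighbors by $1$, so the total sum increases by exactly $3$. An operation~(\ref{SecSur}) replaces a single entry $a_i$ by the block $a'_i,1,1,a''_i$ with $a'_i+a''_i=a_i+1$; the contribution of this block is $a'_i+a''_i+2=a_i+3$, so the sum again increases by exactly $3$, regardless of how $a_i$ is split. Since each of the base solutions $(1,1,1)$ and $(2,1),(1,2)$ has sum $3$, after $S+R$ operations the sum equals $3+3(S+R)=3S+3R+3$, which is the first equality in both~(\ref{SumPbII}) and~(\ref{SumPbI}).

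Next I would track the length. An operation~(\ref{FirstSur}) sends $n\mapsto n+1$ and an operation~(\ref{SecSur}) sends $n\mapsto n+3$, so starting from the base length $n_0$ one has $n=n_0+S+3R$, i.e. $S=n-n_0-3R$. Substituting this into $3S+3R+3$ and using $n_0=3$ for Problems~I and~II gives $3(n-3-3R)+3R+3=3n-6R-6$, the second line of~(\ref{SumPbII}); using $n_0=2$ for Problem~III gives $3(n-2-3R)+3R+3=3n-6R-3$, the second line of~(\ref{SumPbI}). There is no real obstacle here: the argument is pure bookkeeping, and the only points demanding care are to confirm that the sum increment is $3$ for both operation types (independently of the splitting in~(\ref{SecSur})) and to use the correct base length ($3$ versus $2$) for the two parts.
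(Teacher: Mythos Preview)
Your proof is correct and follows exactly the same approach as the paper: track that each surgery operation adds $3$ to the sum and that operations~(\ref{FirstSur}) and~(\ref{SecSur}) increase $n$ by $1$ and $3$ respectively, then solve for $S$ in terms of $n$ and $R$. You have simply written out the bookkeeping in more detail than the paper's two-line proof.
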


\begin{proof}
Both the operations~(\ref{SecSur}) and~(\ref{FirstSur}) add $3$ to the total sum
of the coefficients.
Furthermore, the operations~(\ref{FirstSur})
(resp.~(\ref{SecSur})) increase~$n$ by~$1$ (resp. by~$3$).
\end{proof}

Note that the numbers~$S$ and~$R$ depend only on the solution $(a_1,a_2,\ldots,a_n)$
(and independent of the choice of the sequence of operations producing the solution).
The simplest case $R=0$ is precisely that of totally positive solutions of Conway and Coxeter;
see Appendix.

%%%%%%%%%%%%%%%%%%%%
\subsection{Solutions of Problem~I for small~$n$}\label{Pb1SnSec}
%%%%%%%%%%%%%%%%%%%%

Let us give several examples constructed using the inductive procedure
provided by Theorem~\ref{TheMainThm}.
We start with the list of solutions of Problem~I for~$n\leq8$.

(a)
Part (i) of Corollary~\ref{SimpleCor} implies that
Problem~I has no solutions for $n\leq5$.

(b)
For $n=6$,
Problem~I has the unique solution 
$$
(a_1,a_2,a_3,a_4,a_5,a_6)=(1,1,1,1,1,1)
$$
obtained from~(\ref{ElSolTwo}) by one operation~(\ref{SecSur}).

(c)
For $n=7$, one has $7$ different solutions:
\begin{equation}
\label{PbIn7}
(a_1,a_2,a_3,a_4,a_5,a_6,a_7)=(2,1,2,1,1,1,1)
\end{equation}
and its cyclic permutations.

(d)
For $n=8$, one has $34$ different solutions, namely
\begin{equation}
\label{PbIn8}
(a_1,\ldots,a_8)=
(3,1,1,1,1,2,2,1),
\quad
(3,1,2,1,1,1,2,1),
\quad
(2,2,1,2,1,1,2,1),
\quad
(2,1,2,1,2,1,2,1),
\end{equation}
and their reflections and cyclic permutations.

%%%%%%%%%%%%%%%%%%%%
\subsection{Solutions of Problem~II for small~$n$}\label{Pb2SnSec}
%%%%%%%%%%%%%%%%%%%%

Below is the list of solutions of Problem~II for~$n\leq10$.

(a)
For $n\leq8$ all solutions of Problem~II are given by Conway-Coxeter's solutions,
and correspond to triangulations of $n$-gons; see Appendix.
The number of solutions for a given $n$ is thus equal to the Catalan number~$C_{n-2}$, where
$C_n=\frac{1}{n+1}\binom{2n}{n}$.

(b)
For $n=9$, in addition to $429$ Conway-Coxeter solutions, there is exactly one extra solution:
\begin{equation}
\label{PbIIn9}
(a_1,\ldots,a_9)=(1,1,1,1,1,1,1,1,1).
\end{equation}

(c)
For $n=10$, in addition to $1430$ Conway-Coxeter solutions, there are $15$ solutions:
\begin{equation}
\label{PbIIn10}
(a_1,\ldots,a_{10})=
(2,1,1,1,1,2,1,1,1,1),
\quad
(2,1,2,1,1,1,1,1,1,1),
\end{equation}
and their cyclic permutations.

In Section~\ref{SecTEx} we will give the dissections of $n$-gons corresponding to the above
examples.

%%%%%%%%%%%%%%%%%%%%
%%%%%%%%%%%%%%%%%%%%
\section{The combinatorial model: $3d$-dissections}\label{CombSec}
%%%%%%%%%%%%%%%%%%%%
%%%%%%%%%%%%%%%%%%%%

In this section we prove Theorem~\ref{SecondMainThm} deducing it
from Theorem~\ref{TheMainThm}.
Using the combinatorics of $3d$-dissections,
we then obtain the formulas for the numbers of surgery operations for a given solution
of Problems~I or~II.
Finally, we revisit the examples from Sections~\ref{Pb1SnSec} and~\ref{Pb2SnSec} 
and give their combinatorial realizations.

%%%%%%%%%%%%%%%%%%%%
\subsection{Proof of Theorem~\ref{SecondMainThm}}
%%%%%%%%%%%%%%%%%%%%

Part (i).
Consider a solution $(a_1,\ldots,a_n)$ of Problem~I or~II.
We want to prove that there exists a $3d$-dissection of an $n$-gon
such that its quiddity is precisely the chosen solution.

We proceed by induction on $n$.
By Theorem~\ref{TheMainThm}, the chosen solution can be 
obtained from the initial solution~(\ref{ElSolTwo})
by a series of operations~(\ref{FirstSur}) and~(\ref{SecSur}).
Consider the solution (of length $n-1$ or $n-3$) obtained by the same sequence
but without the last operation.
By induction assumption, this solution corresponds to some $3d$-dissection, say~$D$,
(of an $(n-1)$-gon or an $(n-3)$-gon).
There are then two possibilities.

(a)
If the last operation in the series is that of type~(\ref{FirstSur}), then the solution corresponds to
the angulation~$D$ with extra exterior triangle glued to the edge $(i,i+1)$.

(b) Suppose that the last operation is that of type~(\ref{SecSur}).
Consider the new $3d$-dissection obtained from~$D$ by the
following local surgery at vertex~$i$, along a chosen sub-polygon:
$$
 \xymatrix @!0 @R=0.32cm @C=0.45cm
 {
&&&\ar@{-}[dddddddd]\ar@{-}[rrd]&
\\
&\ar@{-}[rrddddddd]&&&& \\
\\
\ar@{-}[rrrddddd]&&&&&&&\ar@{-}[llllddddd]\\
\\
\ar@{-}[rrrddd]&&&&&&&&\ar@{-}[lllllddd]\\
\\
&\ar@{-}[rrd]&&&&&&& \ar@{-}[llllld]\\
&&&i\ar@{-}[rruuuuuuu]&
}
\qquad\qquad
 \xymatrix @!0 @R=0.32cm @C=0.45cm
 {
&&&&\ar@{-}[ldddddddd]\ar@{-}[rrd]&
\\
&\ar@{-}[rrddddddd]&&&&& \\
\\
\ar@{-}[rrrddddd]&&&&&&&&\ar@{-}[lddddd]\\
\\
\ar@{-}[rrrddd]&&&&&&&&&\ar@{-}[llddd]\\
\\
&\ar@{-}[rrd]&&&&&&&& \ar@{-}[lld]\\
&&&i'\ar@{-}[rd]&&&&i''\ar@{-}[luuuuuuu]\\
&&&&1\ar@{-}[r]&1\ar@{-}[rru]
}
$$
that inserts two new vertices $1,1$ between two copies of the vertex $i$.
This leads to a  $3d$-dissection of an $(n+3)$-gon which is exactly as in the right-hand-side
of~(\ref{SecSur}).

Conversely, given a $3d$-dissection of an $n$-gon, we need to show that
its quiddity is a solution of Problem~I or~II.
This follows from the obvious fact that any $3d$-dissection of an $n$-gon by
pairwise non-crossing diagonals has an {\it exterior} sub-polygon.
By ``exterior'' we mean a sub-polygon without diagonals which is
glued to the rest of the $3d$-dissection along one edge
$$
 \xymatrix @!0 @R=0.32cm @C=0.45cm
 {
&&1\ar@{-}[llddd]\ar@{-}[rr]
&&a_j \ar@{-}[rr]&&\\
&&&&&&\ar@{-}[ldddd]\\
&&&&&&&\ar@{-}[llddd]\\
1\ar@{-}[rdd]\\
&&&&&&& \ar@{-}[lld]\\
&1\ar@{-}[rd]&&&&a_i\ar@{=}[luuuuu]\\
&&1\ar@{-}[r]&1\ar@{-}[rru]
}
$$
Such a $3d$-dissection can be reduced by applying the inverse of one of 
the operations~(\ref{FirstSur}) or~(\ref{SecSur}).
We then proceed by induction.
 
Part (i) of the theorem follows, the proof of Part (ii) is similar.

\medskip

Theorem~\ref{SecondMainThm} is proved. $\Box$

%%%%%%%%%%%%%%%%%%%%
\subsection{Counting the surgery operations}
%%%%%%%%%%%%%%%%%%%%
Consider a solution of Problem~I or~II corresponding to some $3d$-dissection.
Denote by $N_d$ is the number of $3d$-gons in the $3d$-dissection.

\begin{prop}
\label{ContOpProp}
Given a solution of Problem~I or II constructed from~(\ref{ElSolTwo})
by a sequence of~$S$ operations~(\ref{FirstSur}) and~$R$ operations~(\ref{SecSur}),

(i)
The number $S$ counts the total number of sub-polygons except for the initial one:
\begin{equation}
\label{SfromD}
S=\sum_{d\leq\left[\frac{n}{3}\right]}N_d-1.
\end{equation}

(ii)
The number of operations of the second type is the weighted sum
\begin{equation}
\label{RfromD}
R=\sum_{d\leq\left[\frac{n}{3}\right]}
\left(d-1\right)
N_d.
\end{equation}
\end{prop}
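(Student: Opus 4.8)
The plan is to prove both identities simultaneously by induction on the number of surgery operations used to build the solution, tracking in parallel the algebraic data $(S,R)$ and the combinatorial data $(N_d)_d$ of the associated $3d$-dissection. The base case is the initial solution~(\ref{ElSolTwo}), whose dissection is a single triangle: here $S=R=0$ and $N_1=1$ with $N_d=0$ for $d\geq2$, so that $\sum_d N_d-1=0=S$ and $\sum_d(d-1)N_d=0=R$, and both~(\ref{SfromD}) and~(\ref{RfromD}) hold.

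For the inductive step I would appeal to the geometric description of the two families of operations already recorded in the proof of Theorem~\ref{SecondMainThm}: an operation~(\ref{FirstSur}) glues one extra exterior triangle to the dissection, whereas an operation~(\ref{SecSur}) leaves the number of sub-polygons unchanged but increases the number of vertices of a single chosen sub-polygon by~$3$. Thus, adding an operation~(\ref{FirstSur}) sends $S\mapsto S+1$ and $R\mapsto R$, and on the combinatorial side sends $N_1\mapsto N_1+1$ with the other $N_d$ fixed; consequently $\sum_d N_d$ increases by~$1$ while $\sum_d(d-1)N_d$ is unchanged, since the new triangle has weight $d-1=0$. Adding an operation~(\ref{SecSur}) sends $S\mapsto S$ and $R\mapsto R+1$, and converts one $3d$-gon into a $3(d+1)$-gon, i.e. $N_d\mapsto N_d-1$ and $N_{d+1}\mapsto N_{d+1}+1$; hence $\sum_d N_d$ is unchanged while $\sum_d(d-1)N_d$ increases by exactly $d-(d-1)=1$.

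Comparing the two sides, each operation changes $\sum_d N_d-1$ by the same increment as $S$, and $\sum_d(d-1)N_d$ by the same increment as $R$. Since both identities hold in the base case and are preserved by every operation~(\ref{FirstSur}) and~(\ref{SecSur}), Theorem~\ref{TheMainThm}, which guarantees that every solution is reached from~(\ref{ElSolTwo}) by such a sequence, yields~(\ref{SfromD}) and~(\ref{RfromD}) for every solution. As a consistency check one may note that summing the two formulas gives $S+R+1=\sum_d d\,N_d$, which matches the incidence count $\sum_i a_i=\sum_d 3d\,N_d$ together with the total-sum identity $\sum_i a_i=3S+3R+3$ of Corollary~\ref{SumCor}.

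The one point requiring care, and the main obstacle, is well-definedness: the statement attaches $S,R$ to the solution and $N_d$ to a chosen dissection, so I must ensure the result is independent of both choices. That $S$ and $R$ depend only on the solution follows from Corollary~\ref{SumCor}, since $R=(3n-6-\sum_i a_i)/6$ and $S=n-3-3R$ are determined by $n$ and $\sum_i a_i$; the identities then force the combinations $\sum_d N_d$ and $\sum_d(d-1)N_d$ to be independent of which $3d$-dissection realizes a given quiddity. To make the argument manifestly independent of such a choice, I would prefer to run the induction \emph{downward} on the fixed dissection $D$ itself: by the proof of Theorem~\ref{SecondMainThm}, $D$ has an exterior sub-polygon, whose removal is the inverse of an operation~(\ref{FirstSur}) if it is a triangle, or of an operation~(\ref{SecSur}) if it has more vertices, strictly decreasing $n$ in either case. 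This phrasing keeps the dissection fixed throughout, reproduces the same bookkeeping in reverse, and sidesteps the choice issue entirely.
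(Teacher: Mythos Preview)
Your proof is correct and follows essentially the same approach as the paper: both argue by tracking how each of the two surgery operations affects the dissection, noting that~(\ref{FirstSur}) glues an extra triangle (so the total sub-polygon count goes up by one) while~(\ref{SecSur}) replaces a $3d$-gon by a $3(d+1)$-gon (so the count is unchanged but the weighted sum $\sum_d(d-1)N_d$ increases by one). The paper's proof is two terse sentences invoking exactly these facts; your version spells out the induction explicitly, adds the base case, and treats the well-definedness issue (which the paper does not address) with more care than the original.
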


In other words, to calculate $R$, one ignores the triangles, counts hexagons,
counts nonagons $2$ times, dodecagons~$3$ times, etc.

\begin{proof}
An operation~(\ref{FirstSur}) consists in a gluing a triangle.
It increases the total number of sub-polygons by $1$.
This implies~(\ref{SfromD}).

We have proved (see the proof of Theorem~\ref{SecondMainThm}) 
that an operation~(\ref{SecSur}) does not change the total number of
sub-polygons of a $3d$-dissection, but adds $3$ new vertices to one of the
existing sub-polygons.
Hence~(\ref{RfromD}).
\end{proof}

%%%%%%%%%%%%%%%%%%%%
\subsection{Non-uniqueness}\label{NUSec}
%%%%%%%%%%%%%%%%%%%%
Unlike triangulations, a quiddity does not determine the
corresponding $3d$-dissection.
Different $3d$-dissections
may correspond to the same quiddity.\footnote{
This remark and examples were communicated
to me by Alexey Klimenko.}

For instance, this is the case for the following $3d$-dissections of the octagon
$$
\xymatrix @!0 @R=0.30cm @C=0.5cm
 {
&1\ar@{-}[ldd]\ar@{-}[rr]&& 2\ar@{-}[rdd]\\
\\
2\ar@{-}[dd]\ar@{-}[rdddd]&&&&1\ar@{-}[dd]\\
\\
1\ar@{-}[rdd]&&&&2\ar@{-}[ldd]\ar@{-}[luuuu]\\
\\
&2\ar@{-}[rr]&& 1
}
\qquad\qquad
\xymatrix @!0 @R=0.30cm @C=0.5cm
 {
&1\ar@{-}[ldd]\ar@{-}[rr]&& 2\ar@{-}[rdd]\\
\\
2\ar@{-}[dd]\ar@{-}[rrruu]&&&&1\ar@{-}[dd]\\
\\
1\ar@{-}[rdd]&&&&2\ar@{-}[ldd]\ar@{-}[llldd]\\
\\
&2\ar@{-}[rr]&& 1
}
$$
Therefore, one cannot expect a one-to-one correspondence
between solutions of Problems~I--III and $3d$-dissections. 
This discrepancy becomes more flagrant when we consider
$3d$-dissections of $2n$-gons.
Indeed, the following $3d$-dissection of the  tetradecagon
(``Klimenko's dissection'')
$$
\xymatrix @!0 @R=0.30cm @C=0.33cm
 {
&&&&&&3\ar@{-}[dddddddddddd]\ar@{-}[llld]\ar@{-}[rrrd]&
\\
&&&2\ar@{-}[lldd]\ar@{-}[rrrddddddddddd]&&&&&& 1\ar@{-}[rrdd]\\
\\
&1\ar@{-}[ldd]&&&&&&&&&&2\ar@{-}[rdd]\ar@{-}[llllluuu]\\
\\
2\ar@{-}[dd]\ar@{-}[rdddd]&&&&&&&&&&&&1\ar@{-}[dd]\\
\\
1\ar@{-}[rdd]&&&&&&&&&&&&2\ar@{-}[ldd]\ar@{-}[llldddd]\\
\\
&2\ar@{-}[rrdd]&&&&&&&&&&1\ar@{-}[lldd]\\
\\
&&&1\ar@{-}[rrrd]&&&&&& 2\ar@{-}[llld]\\
&&&&&&3&
}
$$
is not centrally symmetric, but the corresponding quiddity is $7$-periodic.
In fact, it coincides with that of Example~\ref{FirstEx}.

%%%%%%%%%%%%%%%%%%%%
\subsection{Examples for small~$n$}\label{SecTEx}
%%%%%%%%%%%%%%%%%%%%
Let us give combinatorial entities of the examples from Section~\ref{AlgoSec}.

(a)
Consider again the solutions of Problem~I  for small~$n$; see Section~\ref{Pb1SnSec}.
For $n=6$ the unique solution $(a_1,\ldots,a_6)=(1,\ldots,1)$ is given by the hexagon
without interior diagonals.

For $n=7$ the unique modulo cyclic permutations solution~(\ref{PbIn7})
corresponds to a triangle glued to an hexagon
$$
\xymatrix @!0 @R=0.35cm @C=0.35cm
{
&&&1\ar@{-}[rrd]\ar@{-}[lld]
\\
&2\ar@{-}[ldd]\ar@{-}[rrrr]&&&& 2\ar@{-}[rdd]&\\
\\
1\ar@{-}[rdd]&&&&&& 1\ar@{-}[ldd]\\
\\
&1\ar@{-}[rrrr]&&&&1
}
$$

For $n=8$ the solutions of Problem~I correspond to dissections of the octagon
into hexagon and two triangles.
There are exactly $4$ such dissections (modulo reflections and rotations):
$$
\xymatrix @!0 @R=0.30cm @C=0.5cm
 {
&1\ar@{-}[ldd]\ar@{-}[rr]&& 2\ar@{-}[rdd]\\
\\
3\ar@{-}[dd]\ar@{-}[rrrr]\ar@{-}[rrruu]&&&&2\ar@{-}[dd]\\
\\
1\ar@{-}[rdd]&&&&1\ar@{-}[ldd]\\
\\
&1\ar@{-}[rr]&& 1
}
\quad
\xymatrix @!0 @R=0.30cm @C=0.5cm
 {
&1\ar@{-}[ldd]\ar@{-}[rr]&& 2\ar@{-}[rdd]\\
\\
3\ar@{-}[dd]\ar@{-}[rdddd]\ar@{-}[rrruu]&&&&1\ar@{-}[dd]\\
\\
1\ar@{-}[rdd]&&&&1\ar@{-}[ldd]\\
\\
&2\ar@{-}[rr]&& 1
}
\quad
\xymatrix @!0 @R=0.30cm @C=0.5cm
 {
&1\ar@{-}[ldd]\ar@{-}[rr]&& 2\ar@{-}[rdd]\\
\\
2\ar@{-}[dd]\ar@{-}[rrruu]&&&&1\ar@{-}[dd]\\
\\
2\ar@{-}[rrrdd]\ar@{-}[rdd]&&&&1\ar@{-}[ldd]\\
\\
&1\ar@{-}[rr]&& 2
}
\quad
\xymatrix @!0 @R=0.30cm @C=0.5cm
 {
&1\ar@{-}[ldd]\ar@{-}[rr]&& 2\ar@{-}[rdd]\\
\\
2\ar@{-}[dd]\ar@{-}[rrruu]&&&&1\ar@{-}[dd]\\
\\
1\ar@{-}[rdd]&&&&2\ar@{-}[ldd]\ar@{-}[llldd]\\
\\
&2\ar@{-}[rr]&& 1
}
$$
in full accordance with~(\ref{PbIn8}).

(b)
Consider now the solutions of Problem~II discussed in Section~\ref{Pb2SnSec}.
For $n=9$ the solution~(\ref{PbIIn9}) obviously corresponds to the nonagon
with no dissection.
For $n=10$ there are two possibilities:  two glued hexagons
and a triangle glued to a nonagon
\begin{equation}
\label{ExEqD}
\xymatrix @!0 @R=0.32cm @C=0.45cm
 {
&&&2\ar@{-}[dddddddd]\ar@{-}[lld]\ar@{-}[rrd]&
\\
&1\ar@{-}[ldd]&&&& 1\ar@{-}[rdd]\\
\\
1\ar@{-}[dd]&&&&&&1\ar@{-}[dd]\\
\\
1\ar@{-}[rdd]&&&&&&1\ar@{-}[ldd]\\
\\
&1\ar@{-}[rrd]&&&& 1\ar@{-}[lld]\\
&&&2&
}
\qquad\qquad
\xymatrix @!0 @R=0.32cm @C=0.45cm
 {
&&&2\ar@{-}[lllddd]\ar@{-}[lld]\ar@{-}[rrd]&
\\
&1\ar@{-}[ldd]&&&& 1\ar@{-}[rdd]\\
\\
2\ar@{-}[dd]&&&&&&1\ar@{-}[dd]\\
\\
1\ar@{-}[rdd]&&&&&&1\ar@{-}[ldd]\\
\\
&1\ar@{-}[rrd]&&&& 1\ar@{-}[lld]\\
&&&1&
}
\end{equation}
This corresponds (modulo cyclic permutations) to the solutions~(\ref{PbIIn10}).
The first of the above dissections,
i.e., the ``hexagonal''  one, will 
play an important role in Section~\ref{GeneratSec}.

%%%%%%%%%%%%%%%%%%%%
%%%%%%%%%%%%%%%%%%%%
\section{Problem~III and zero-trace equation}\label{DiopSec}
%%%%%%%%%%%%%%%%%%%%
%%%%%%%%%%%%%%%%%%%%

An elementary observation shows that
Problem~III is equivalent to a single Diophantine equation,
namely $\tr{}M_n(a_1,\ldots,a_n)=0$.
We show that $3d$-dissections allow one to construct all integer
zero-trace unimodular matrices.

%%%%%%%%%%%%%%%%%%%%
\subsection{The ``Rotundus'' polynomial}
%%%%%%%%%%%%%%%%%%%%
The trace of the matrix (\ref{SLEq})
is a beautiful cyclically invariant polynomial
in $a_1,\ldots,a_n$, that we denote by $R_n(a_1,\ldots,a_n)$.
The first examples are:
$$
\begin{array}{rcl}
R_1(a) &=& a,\\[2pt]
R_2(a_1,a_2) &=& a_1a_2-2,\\[2pt]
R_3(a_1,a_2,a_3) &=& a_1a_2a_3-a_1-a_2-a_3,\\[2pt]
R_4(a_1,a_2,a_3,a_4) &=& a_1a_2a_3a_4-a_1a_2-a_2a_3-a_3a_4-a_1a_4+2,\\[2pt]
R_5(a_1,a_2,a_3,a_4,a_5) &=& 
a_1a_2a_3a_4a_5\\
&&-a_1a_2a_3-a_2a_3a_4-a_3a_4a_5-a_1a_4a_5-a_1a_2a_5\\
&&+a_1+a_2+a_3+a_4+a_5.
\\
\end{array}
$$
The polynomial $R_n(a_1,\ldots,a_n)$ was called the ``Rotundus'' in~\cite{CoOv},
where it is proved that $R_n(a_1,\ldots,a_n)$ 
can also be calculated as the Pfaffian of a certain skew-symmetric matrix.
Note that $R_n(a_1,\ldots,a_n)$ is the polynomial part of the rational function
$$
a_1a_2\cdots{}a_n
\left(1-\frac{1}{a_1a_2}\right)
\left(1-\frac{1}{a_2a_3}\right)
\cdots
\left(1-\frac{1}{a_na_1}\right).
$$

%%%%%%%%%%%%%%%%%%%%
\subsection{The ``Rotundus equation''}
%%%%%%%%%%%%%%%%%%%%
An $n$-tuple of positive integers $(a_1,\ldots,a_n)$
is a solution of Problem~III if and only if $\tr{}M_n(a_1,\ldots,a_n)=0.$
In other words, we have the following.

\begin{prop}
Every solution of Problem~III is a solution of the equation
\begin{equation}
\label{RotEq}
R_n(a_1,\ldots,a_n)=0,
\end{equation}
and vice-versa.
\end{prop}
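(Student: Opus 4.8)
The plan is to reduce the equivalence to the Cayley--Hamilton identity for $2\times2$ matrices, exploiting the fact that $R_n(a_1,\ldots,a_n)$ is by definition the trace of $M_n(a_1,\ldots,a_n)$.

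First I would recall that $M_n(a_1,\ldots,a_n)\in\SL(2,\Z)$, so its determinant equals $1$. The Cayley--Hamilton theorem then gives
$$
M_n(a_1,\ldots,a_n)^2-\big(\tr M_n(a_1,\ldots,a_n)\big)\,M_n(a_1,\ldots,a_n)+\Id=0,
$$
which, since $R_n(a_1,\ldots,a_n)=\tr M_n(a_1,\ldots,a_n)$ by the definition of the Rotundus polynomial, I would rewrite as
$$
M_n(a_1,\ldots,a_n)^2=R_n(a_1,\ldots,a_n)\,M_n(a_1,\ldots,a_n)-\Id.
$$

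Both implications now drop out of this single identity. If $R_n(a_1,\ldots,a_n)=0$, the displayed relation immediately gives $M_n(a_1,\ldots,a_n)^2=-\Id$, so the tuple solves Problem~III. Conversely, if $M_n(a_1,\ldots,a_n)^2=-\Id$, the same relation yields $R_n(a_1,\ldots,a_n)\,M_n(a_1,\ldots,a_n)=0$; because $\det M_n=1$, the matrix is invertible and in particular nonzero, which forces $R_n(a_1,\ldots,a_n)=0$.

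There is essentially no obstacle here, the whole argument being a one-line consequence of Cayley--Hamilton together with $\det M_n=1$. The only point that deserves care is the converse direction, where one must invoke the invertibility of $M_n$ in order to cancel the factor $M_n$ and conclude $R_n=0$ rather than merely $R_n\,M_n=0$. This is exactly the ``single equation'' reformulation of Problem~III already announced in observation~(c) of the introduction, namely $\tr M_n(a_1,\ldots,a_n)=0$.
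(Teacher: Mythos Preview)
Your proof is correct and follows essentially the same idea as the paper's: both arguments exploit the characteristic polynomial $\lambda^2-(\tr M)\lambda+1$ of an element of $\SL(2,\Z)$. The paper phrases it in terms of eigenvalues (trace zero and determinant one force eigenvalues $\pm i$, hence $M^2=-\Id$), whereas you invoke Cayley--Hamilton directly; your version is slightly more explicit in handling the converse, but the mathematical content is identical.
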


\begin{proof}
A trace zero element of $\SL(2,\Z)$ has eigenvalues $i$ and $-i$.
This is equivalent to the fact that it squares to $-\id$.
\end{proof}

\begin{rem}
Note also that every solution of Problem I or II satisfies the equation
$R_n(a_1,\ldots,a_n)=2$ or $-2$, respectively.
However, the converse is false: a solution of one of these equations
is not necessarily a solution of Problem I or II.
It is also easy to see that,
unlike~(\ref{RotEq}), the equation $R_n(a_1,\ldots,a_n)=\pm2$ has infinitely many
positive integer solutions for sufficiently large~$n$.
For instance, one has $R_n(a,1,1)=-2$ for any~$a$.
\end{rem}

%%%%%%%%%%%%%%%%%%%%
\subsection{The list of solutions of Problem~III for small~$n$}\label{ExIIISn}
%%%%%%%%%%%%%%%%%%%%

Let us give a complete list of solutions of Problem~III for~$n\leq6$.

(a)
For $n=2,3$, and $4$, all the solutions are given by centrally symmetric
triangulations of a quadrilateral (2), hexagon (6), and octagon (20), respectively.

(b)
For $n=5$, besides $70$ solutions
corresponding to centrally symmetric triangulations of the decagon 
(see Example~\ref{DecaEx} below),
one obtains $5$ additional solutions:
$$
(a_1,a_2,a_3,a_4,a_5)=(1,1,2,1,1)
$$
and its cyclic permutations.
The corresponding centrally symmetric dissection of a decagon is 
the hexagonal dissection in~(\ref{ExEqD}).

(c)
For $n=6$, besides $252$ solutions
corresponding to centrally symmetric triangulations of the dodecagon, one gets~$26$
additional solutions:
$$
(a_1,a_2,a_3,a_4,a_5,a_6)=
(3,1,2,1,1,1),\quad
(2,2,1,2,1,1),\quad
(2,1,2,1,2,1),
$$
their cyclic permutations and reflections.

We mention that the sequence $2,6,20,75,278,\ldots$ corresponding to the total number 
of solutions of Problem~III is not in the OEIS.

%%%%%%%%%%%%%%%%%%%%
%%%%%%%%%%%%%%%%%%%%
\section{The rotation index}\label{RotSec}
%%%%%%%%%%%%%%%%%%%%
%%%%%%%%%%%%%%%%%%%%

In this section we explain how to associate an $n$-gon in the projective line,
i.e., an element of the moduli space $\cM_{0,n}$,
to every solution of Problem~I or~II.

We then apply the Sturm theory of linear difference equations
to define a geometric invariant
of solutions of Problem~I, II, and~III.
It is given by the index of a star-shaped broken line in~$\R^2$, that
can also be  understood as the homotopy class of an 
$n$-gon in the projective line,
or  as the rotation number of the equation~(\ref{DEqEq}).
The defined invariant is a {\it (half)integer}.
We prove that the index actually counts the number of operations
of the second type~(\ref{SecSur})
needed for a solution to be obtained from the initial one.

%%%%%%%%%%%%%%%%%%%%
\subsection{Index of a star-shaped broken line}
%%%%%%%%%%%%%%%%%%%%
Recall the following geometric notions.

a) The index of a smooth closed plane curve is the number of 
rotations of its tangent vector.

b) A smooth oriented (parametrized) closed curve $\g(t)$ in~$\R^2$,
where $t\in[0,1]$ and $\g(t+1)=\g(t)$ is  {\it star-shaped} 
if it does not contain the origin, and the tangent vector $\dot{\g}(t)$ 
is transversal to  $\g(t)$, for all $t$.

c)
The index of a star-shaped curve can be calculated as the homotopy class
of the projection of $\g(t)$ to~$\RP^1$ in the tautological line bundle
$\R^2\setminus\{0\}\to\RP^1$,
i.e., the rotation number around the origin.

Definitions a)--c) obviously extend to piecewise smooth
curves, in particular to {\it broken lines}.

\begin{ex}
The index of the following star-shaped broken lines:
$$
\xymatrix @!0 @R=1cm @C=1cm
{
&\ar@{<-}[rd]\ar@{->}[ld]
\\
&0&&\\
&\ar@{->}[ru]\ar@{<-}[lu]
}
\qquad
\xymatrix @!0 @R=0.5cm @C=0.7cm
{
&&\ar@{<-}[rrrdd]\ar@{->}[lddd]
\\
&&&\ar@{->}[ll]&
\\
&&0&&&\\
&&&\ar@{<-}[ll]\ar@{->}[uu]&
\\
&&\ar@{->}[rrruu]\ar@{<-}[luuu]
}
$$
is equal to $1$ and $2$, respectively.
\end{ex}

Furthermore, if the curve is {\it antiperiodic}, that is if $\g(t+1)=-\g(t)$,
the index is still well defined, but takes half-integer values.

\begin{ex}
The index of the following star-shaped antiperiodic broken lines:
$$
\xymatrix @!0 @R=1cm @C=1cm
{
&\ar@{<-}[rd]\ar@{->}[ld]
\\
&0&&}
\qquad
\xymatrix @!0 @R=0.7cm @C=0.7cm
{
&\ar@{->}[dd]&&\ar@{->}[llld]&
\\
&&0&&\ar@{->}[lllu]\\
&&&\ar@{<-}[ll]\ar@{->}[uu]&
}
$$
is equal to $\frac{1}{2}$ and $\frac{3}{2}$, respectively.
\end{ex}

%%%%%%%%%%%%%%%%%%%%
\subsection{The broken line of a matrix $M_n(a_1,\ldots,a_n)$}
%%%%%%%%%%%%%%%%%%%%
Given a solution $(a_1,\ldots,a_n)$ of Problem~I, II, or~III,
let us construct a star-shaped broken line in~$\R^2$.
Consider the corresponding discrete Sturm-Liouville equation
$$
V_{i+1}=a_iV_i-V_{i-1},
$$
where the set of coefficients $a_i$ is understood as
an infinite $n$-periodic sequence~$(a_i)_{i\in\Z}$.
Choose  two linearly independent solutions,
$V^{(1)}=(V^{(1)}_i)_{i\in\Z}$ and $V^{(2)}=(V^{(2)}_i)_{i\in\Z}$.
One then has a sequence of points  in~$\R^2$:
$$
V_i=
\left(V^{(1)}_{i},
V^{(2)}_{i}\right),
$$
These points form a broken star-shaped line.
Indeed, the determinant
$$
W(V^{(1)},V^{(2)}):=
\left|
\begin{array}{cc}
V^{(1)}_{i+1}&V^{(1)}_{i}\\[4pt]
V^{(2)}_{i+1}&V^{(2)}_{i}
\end{array}
\right|,
$$
usually called the {\it Wronski determinant},
is constant, i.e., does not depend on $i$.
Therefore, the sequence of points $(V_i)_{i\in\Z}$ in~$\R^2$ always
rotates around the origin in the same (positive or negative, depending
on the choice of the two solutions) direction.
Note that a different choice of the solutions
$V^{(1)}$ and $V^{(2)}$ gives the same broken line,
modulo a linear coordinate transformation in~$\R^2$.

If $M_n(a_1,\ldots,a_n)=\Id$ (resp. $-\Id$), then
the broken line thus constructed is periodic, i.e., $V_{i+n}=V_{i}$ 
(resp. anti-periodic, $V_{i+n}=-V_{i}$).
We will be interested in
the {\it index} of this broken line.

\begin{rem}
Note that the index of an antiperiodic star-shaped broken line
is a well-defined half-integer.
If $M_n(a_1,\ldots,a_n)^2=-\Id$, then, using the doubling procedure,
we can still define the index of the corresponding star-shaped broken line
as a multiple of $\frac{1}{2}$.
\end{rem}

\begin{ex}
(a)
Consider the sequence $(a_1,\ldots,a_6)=(1,1,1,1,1,1)$, which is the solution of Problem~I
obtained from $(1,1,1)$ by applying one operation~(\ref{SecSur}).
Choosing the solutions with the initial conditions 
$(V^{(1)}_0,V^{(1)}_1)=(1,0)$ and $(V^{(2)}_0,V^{(2)}_1)=(0,1)$, one obtains the
following hexagon in~$\R^2$:
$\{(1,0),(0,1),(-1,1),(-1,0),(0,-1),(1,-1)\}$.
$$
\xymatrix @!0 @R=1cm @C=1cm
{
{\textcircled{\raisebox{-.3pt} {2}}}\ar@{->}[d]
&{\textcircled{\raisebox{-.3pt} {1}}}\ar@{->}[l]
\\
{\textcircled{\raisebox{-.3pt} {3}}}\ar@{->}[rd]&
0&
 {\textcircled{\raisebox{-.3pt} {0}}}\ar@{->}[lu]&\\
&{\textcircled{\raisebox{-.3pt} {4}}}\ar@{->}[r]& {\textcircled{\raisebox{-.3pt} {5}}}\ar@{->}[u]
}
$$
The index of this hexagon is $1$.

(b)
Consider the solution of Problem~II $(a_1,a_2,a_3,a_4)=(2,1,2,1)$
obtained from $(1,1,1)$ by applying one operation~(\ref{FirstSur}).
Choosing the solutions with the same initial conditions as above, one obtains the
following antiperiodic quadrilateral in~$\R^2$:
$\{(1,0),(0,1),(-1,2),(-1,1)\}$.

$$
\xymatrix @!0 @R=1cm @C=1cm
{
{\textcircled{\raisebox{-.3pt} {2}}}\ar@{->}[d]&\\
{\textcircled{\raisebox{-.3pt} {3}}}\ar@{->}[d]
&{\textcircled{\raisebox{-.3pt} {1}}}\ar@{->}[lu]
\\
{\textcircled{\raisebox{-.3pt} {4}}}&
0&
 {\textcircled{\raisebox{-.3pt} {0}}}\ar@{->}[lu]&
 }
$$
whose index is $\frac{1}{2}$.
\end{ex}

%%%%%%%%%%%%%%%%%%%%
\subsection{The index of a solution}
%%%%%%%%%%%%%%%%%%%%

\begin{prop}
\label{BrPr}
For a solution of Problem~I or~II
obtained from~(\ref{ElSolTwo}) by a sequence of $S$ operations~(\ref{FirstSur})
and $R$ operations~(\ref{SecSur}), the index of the corresponding broken
line is equal to~$\frac{1}{2}(R+1)$.
\end{prop}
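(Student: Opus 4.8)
The plan is to prove Proposition~\ref{BrPr} by induction on the total number $S+R$ of surgery operations, tracking how each elementary operation changes the index of the associated star-shaped broken line. The base case is the initial solution~(\ref{ElSolTwo}), where $(a_1,a_2,a_3)=(1,1,1)$ corresponds to $S=R=0$; here the predicted index is $\frac{1}{2}$, and I would verify this directly by computing the broken line for, say, initial conditions $(V^{(1)}_0,V^{(1)}_1)=(1,0)$ and $(V^{(2)}_0,V^{(2)}_1)=(0,1)$. Since $M_3(1,1,1)=-\Id$, this line is antiperiodic with period~$3$, giving three points in $\R^2$; one checks that the tangent vector executes a half-rotation, so the index is indeed $\frac{1}{2}=\frac{1}{2}(0+1)$.

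For the inductive step I would analyze the two operation types separately. First I would show that an operation~(\ref{FirstSur}) leaves the index unchanged. Geometrically this is natural: from~(\ref{Casea}) the matrix $M_n$ is unaltered, so the broken line is merely refined by inserting one extra vertex. The key point is that this new vertex lies in the cone between its two neighbors without adding a full turn around the origin, i.e.\ the inserted point does not let the tangent direction wind further. I would make this precise using the Wronski-determinant sign, which is preserved, so that the broken line keeps rotating monotonically in the same direction and the inserted vertex contributes zero net rotation. Hence $S\mapsto S+1$ leaves $\frac{1}{2}(R+1)$ fixed, matching the claim. Second, I would show that an operation~(\ref{SecSur}) increases the index by exactly $\frac{1}{2}$: by~(\ref{Caseb}) the matrix changes sign, so periodic becomes antiperiodic or vice versa, and the three newly inserted vertices force the broken line to execute one additional half-turn around the origin. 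This is the step where $R\mapsto R+1$ produces the increment $\frac{1}{2}(R+1)\mapsto\frac{1}{2}(R+2)$.

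The main obstacle will be the rigorous bookkeeping of the rotation increment under the operation~(\ref{SecSur}). One must verify that inserting the fragment $(a'_i,1,1,a''_i)$ adds precisely one half-rotation and not, say, $-\frac{1}{2}$ or $\frac{3}{2}$; this requires controlling the \emph{direction} of the detour that the broken line makes when two consecutive $1$'s appear. I would handle this by computing the three new points explicitly in terms of the two flanking points $V_{i-1},V_i$ of the pre-surgery line, using $V_{i+1}=a_iV_i-V_{i-1}$, and checking that the sign of the successive Wronski (cross-product) determinants is consistent with the global orientation, so that the extra vertices genuinely wrap once around the origin in the positive sense. The star-shapedness established in the previous subsection guarantees that each elementary segment contributes a well-defined angular increment in $(0,\pi)$, which is what allows the total index to be computed additively and makes the half-integer increment unambiguous.

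Finally, I would remark that well-definedness of the index is not circular here: by Corollary~\ref{SumCor} the numbers $S$ and $R$ depend only on the solution, and the preceding subsection already noted that the broken line itself is independent (up to linear change of coordinates in $\R^2$, which preserves the rotation number) of the choice of the two linearly independent solutions $V^{(1)},V^{(2)}$. Thus the formula $\frac{1}{2}(R+1)$ assigns a genuine invariant to each solution, and the induction on $S+R$ completes the proof.
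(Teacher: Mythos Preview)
Your proposal is correct and follows essentially the same inductive strategy as the paper: show that operation~(\ref{FirstSur}) leaves the index unchanged and that operation~(\ref{SecSur}) adds exactly~$\tfrac{1}{2}$. The paper carries out precisely the explicit computation you plan---for~(\ref{FirstSur}) the inserted vertex is simply $V_i+V_{i+1}$, and for~(\ref{SecSur}) the new sequence reads $(\ldots,V_{i-1},V_i,\,a'_iV_i-V_{i-1},\,(a'_i-1)V_i-V_{i-1},\,-V_i,-V_{i+1},\ldots)$, so the tail is the antipodal image of the old one and the $180^\circ$ rotation (hence the $+\tfrac{1}{2}$) is immediate without any separate Wronskian bookkeeping.
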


\begin{proof}
We need to show that the operations of the first type applied to
solution of Problems~I and~II do not change the index of the corresponding
broken line, while the operations of the second type increase this index by $\frac{1}{2}$.

An operation~(\ref{FirstSur}) adds one additional point,
$V_i+V_{i+1}$, between the points 
$V_i$ and $V_{i+1}$ in the sequence of points $(V_i)_{i\in\Z}$.
The resulting sequence is $(\ldots,V_i,V_i+V_{i+1},V_{i+1},\ldots)$,
which has the same index as the initial one.

An easy computation shows that the operation~(\ref{SecSur}) 
transforms the sequence of points $(V_i)_{i\in\Z}$ as follows:
$$
(\ldots,V_{i-1},V_i,V_{i+1},\ldots)
\mapsto
(\ldots,V_{i-1},V_i,\,
a'_iV_i-V_{i-1},\,
(a'_i-1)V_i-V_{i-1},\,
-V_i,-V_{i+1},\ldots).
$$
Indeed, the sequence on the right-hand-side is a solution of
the equation~(\ref{DEqEq})
with coefficients 
$$
(a_1,\ldots,a'_i,\,1,\,\,1,\,a''_i,\ldots,a_n).
$$
Therefore, the operation~(\ref{SecSur}) rotates the picture by $180^\circ$
and thus increases the index by~$\frac{1}{2}$.
\end{proof}

%%%%%%%%%%%%%%%%%%%%
\subsection{Non-osculating solutions and triangulations}
%%%%%%%%%%%%%%%%%%%%

Similarly to the classical Sturm theory of linear differential and difference equations,
it is natural to introduce the following notion.

\begin{defn}
A solution of Problem~II whose the index is equal to~$\frac{1}{2}$,
 is called {\it non-osculating}.
\end{defn}

In other words, a solution of Problem~II is non-osculating
the number~$R$ of surgery operations~(\ref{SecSur})
needed to obtain this solution from the elementary solution
$(a_1,a_2,a_3)=(1,1,1)$ equals zero.
Note that
solutions of Problem~I cannot be non-osculating because $R$ is odd in this case.

The class of non-osculating solutions of Problem~II is precisely the
totally positive solutions of
Conway and Coxeter (see Appendix below).
Indeed, if the number~$R$ equals zero, then the $3d$-dissection
is a triangulation, cf. Proposition~\ref{ContOpProp}.

Similarly, one can define the class of non-osculating solutions of Problem~III
as that corresponding to symmetric triangulations of a $2n$-gon.
Again, the non-osculating property is equivalent to that of total positivity.

%%%%%%%%%%%%%%%%%%%%
%%%%%%%%%%%%%%%%%%%%
\section{An application: oscillating tame friezes}\label{CoCoSec}
%%%%%%%%%%%%%%%%%%%%
%%%%%%%%%%%%%%%%%%%%

We briefly introduce the notion of tame ``oscillating'' Coxeter friezes.
We show that this notion is equivalent to solutions of Problem~II.
Theorems~\ref{TheMainThm} and~\ref{SecondMainThm} then provide a classification
of tame oscillating friezes.
It is easy to see that
oscillating Coxeter friezes satisfy the main properties of the classical friezes,
such as Coxeter's glide symmetry.

%%%%%%%%%%%%%%%%%%%%
\subsection{Classical Coxeter friezes}
%%%%%%%%%%%%%%%%%%%%

Coxeter's frieze~\cite{Cox} is an array of $(n-1)$ infinite rows of  {\it positive integers},
with the first and the last rows consisting of $1$'s.
Consecutive rows are shifted, and the so-called Coxeter {\it unimodular rule}:
$$
\begin{array}{ccc}
&b&\\
a&&d\\
&c&
\end{array},
\qquad\qquad
ad-bc=1,
$$
is satisfied for every elementary $2\times2$ ``diamond''.

The Conway-Coxeter theorem~\cite{CoCo} provides a classification
of  Coxeter's friezes.
Every frieze corresponds to a triangulated $n$-gon,
the rows $2$ and $n-2$ being the quiddity of a triangulation;
see Definition~\ref{QuiD}.

\begin{ex}
For example, the frieze
$$
 \begin{array}{cccccccccccccccc}
\cdots&&1&& 1&&1&&1&&1
 \\[2pt]
&1&&3&&1&&2&&2&&\cdots
 \\[2pt]
\cdots&&2&&2&&1&&3&&1
 \\[2pt]
&1&& 1&&1&&1&&1&&\cdots
\end{array}
$$
is the unique (up to a cyclic permutation) Coxeter frieze for $n=5$.
It corresponds to the quiddity $(a_1,a_2,a_3,a_4,a_5)=(1,3,1,2,2)$.
\end{ex}

We refer to~\cite{Sop} for a survey on friezes and their connection
to various topics.

%%%%%%%%%%%%%%%%%%%%
\subsection{Tameness}
%%%%%%%%%%%%%%%%%%%%

Let us relax the positivity assumption.
Then frieze patterns may become undetermined, as discussed in~\cite{CR},
or very ``wild'', and the classification
of such friezes is out of reach; cf.~\cite{Cun}.
An important property that we keep is that of tameness, first introduced in~\cite{BR}.

\begin{defn}
A frieze is {\it tame} if the determinant of every elementary $3\times3$-diamond vanishes.
\end{defn}

\begin{rem}
Note that every classical Coxeter frieze is tame.
This follows easily from the positivity assumption.
\end{rem}

%%%%%%%%%%%%%%%%%%%%
\subsection{Friezes corresponding to solutions of Problems~II and~III}
%%%%%%%%%%%%%%%%%%%%

It turns out that
solutions of Problems~II and~III precisely correspond to tame friezes with 
$(a_i)_{i\in\Z}$ in the $2$nd row.
More precisely, we have the following

\begin{prop}
There is a one-to-one correspondence between

(i) Solutions of Problem~II and tame friezes with the $2$nd row all positive integers;

(ii) Solutions of Problem~III and tame friezes with even~$n$ and the $2$nd row of positive integers
which are invariant under reflection in the middle row.
\end{prop}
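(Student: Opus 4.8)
The plan is to establish the correspondence by exhibiting, in both directions, the explicit formula relating a frieze to the monodromy matrix, and then tracking how the two structural constraints (antiperiodicity and the reflection symmetry) translate between the two sides. First I would recall that a tame frieze is completely determined by its second row. Indeed, tameness (vanishing of every $3\times 3$ diamond) forces each row to satisfy the linear three-term recurrence $V_{i-1}-a_iV_i+V_{i+1}=0$ of equation~(\ref{DEqEq}) along diagonals, with $(a_i)_{i\in\Z}$ the second row. More precisely, the entries along the southeast diagonals of the frieze are exactly solutions of~(\ref{DEqEq}), and the boundary rows of $1$'s impose the initial conditions that pin down the solution. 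So specifying the second row $(a_i)_{i\in\Z}$ and the requirement that the frieze close up into $(n-1)$ rows bounded by $1$'s is equivalent to a monodromy condition on $M_n(a_1,\ldots,a_n)$.

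Next I would make this monodromy condition precise. The frieze has top and bottom rows of $1$'s, and the diagonal solutions of~(\ref{DEqEq}) that start along the top row of $1$'s must return to the bottom row of $1$'s after $n-1$ steps, which is exactly the statement that the relevant solution vector $(V_{i+1},V_i)^t$ transforms under $M_n(a_1,\ldots,a_n)$ in a prescribed way. For part~(i) I expect the closure condition to be precisely $M_n(a_1,\ldots,a_n)=-\Id$, i.e.\ Problem~II: the antiperiodicity $V_{i+n}=-V_i$ of the diagonal solutions is what allows the frieze to be glued into a consistent doubly-infinite array with the correct boundary rows (this is Coxeter's glide symmetry). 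In the classical positive case this recovers the Conway--Coxeter correspondence, and tameness is automatic by the remark following Definition~\ref{QuiD}; dropping positivity but keeping tameness is exactly what enlarges the picture to all of Problem~II. This direction is essentially an unwinding of definitions once the diagonal-solution interpretation is in place.

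For part~(ii) the new ingredient is the reflection symmetry in the middle row together with the parity of $n$. When $n$ is even the frieze has a genuine central row, and invariance of the frieze under reflection in that row is a symmetry of the array. I would show that this reflection symmetry corresponds, on the level of the three-term recurrence, to the condition that reading the quiddity and its reverse produces an antiperiodic-of-antiperiodic pattern, i.e.\ that $M_n(a_1,\ldots,a_n)$ squares to $-\Id$ rather than merely equalling $-\Id$. Concretely, the half-quiddity perspective from Theorem~\ref{SecondMainThm}(ii) and the doubling observation~(b) in the introduction already tell us that solutions of Problem~III double to solutions of Problem~II; the reflection symmetry of the frieze is the frieze-theoretic shadow of precisely this doubling and the central symmetry of the associated $3d$-dissection. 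So I would argue that a tame frieze with the reflection symmetry has its second row equal to a half-quiddity of a centrally symmetric dissection, hence a solution of Problem~III, and conversely.

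The main obstacle I anticipate is the bookkeeping needed to pin down the exact closure condition and, especially, to show that the reflection symmetry translates to $M_n^2=-\Id$ and \emph{not} to some other condition such as symmetry of the matrix itself. The subtlety is that, as noted in observation~(a) of the introduction, for Problem~III the matrix $M_n$ is not invariant under cyclic permutation, so one must be careful about which basepoint along the frieze one reads the quiddity from and how the reflection acts on that choice. I would handle this by fixing the reflection axis to pass through the center of the frieze and verifying on the recurrence that reflecting the index $i\mapsto c-i$ (for the appropriate center $c$) conjugates the transfer matrices into their reverse-order product, whose relation to $M_n^2=-\Id$ can then be checked directly. Everything else reduces to the already-established equivalence of tameness with the recurrence~(\ref{DEqEq}) and to Theorem~\ref{SecondMainThm}.
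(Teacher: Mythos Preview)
Your proposal is correct and follows essentially the same route as the paper: both rest on the key lemma that every diagonal of a tame frieze satisfies the recurrence~(\ref{DEqEq}) with coefficients given by the second row, from which the closure condition $M_n=-\Id$ for part~(i) drops out immediately. The only mild difference is in part~(ii): the paper dispatches it in one line by invoking Coxeter's glide symmetry of the frieze, whereas you route the argument through the doubling observation and Theorem~\ref{SecondMainThm}(ii); your path is a bit longer but has the virtue of making the link to the combinatorial model explicit, while the paper's use of glide symmetry avoids the basepoint bookkeeping you flag as the main obstacle.
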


\begin{proof}
The following fact was noticed in~\cite{CoCo} for classical
Coxeter friezes, and proved in~\cite{SVRS} for tame friezes.

\begin{lem}
Every diagonal of a tame frieze is a solution of the equation~(\ref{DEqEq})
with coefficients $(a_i)_{i\in\Z}$ in the $2$nd row of the frieze.
\end{lem}

Part (i) readily follows from the lemma, while Part~(ii) is then a consequence of
Coxeter's glide symmetry.
\end{proof}

\begin{ex}
The solution of Problem~III with $(a_1,a_2,a_3,a_4,a_5)=(1,1,2,1,1)$
generates the following tame frieze with $n=10$:
$$
 \begin{array}{ccccccccccccccccccccccccc}
\cdots&&1&& 1&&1&&1&&1&&1&&1&&1&&1&&1
 \\[2pt]
&2&&1&&1&&1&&1&&2&&1&&1&&1&&1&&\cdots
 \\[2pt]
\cdots&&1&&0&&0&&0&&1&&1&&0&&0&&0&&1
 \\[2pt]
&0&&\!-1&&\!-1&&\!-1&&\!-1&&0&&\!-1&&\!-1&&\!-1&&\!-1&&\cdots
 \\[2pt]
\cdots&&\!-1&&\!-2&&\!-1&&\!-2&&\!-1&&\!-1&&\!-2&&\!-1&&\!-2&&\!-1
 \\[2pt]
&0&&\!-1&&\!-1&&\!-1&&\!-1&&0&&\!-1&&\!-1&&\!-1&&\!-1&&\cdots
 \\[2pt]
\cdots&&1&&0&&0&&0&&1&&1&&0&&0&&0&&1
 \\[2pt]
&2&&1&&1&&1&&1&&2&&1&&1&&1&&1&&\cdots
 \\[2pt]
\cdots&&1&& 1&&1&&1&&1&&1&&1&&1&&1&&1
\end{array}
$$
Every row is $5$-periodic, and the frieze is symmetric under the
reflection.
\end{ex}

\begin{rem}
(a)
The condition of {\it total positivity} for a solution
$(a_1,\ldots,a_n)$ of one of the Problems~I-III
is precisely the condition that every entry of the frieze pattern
with quiddity $(a_1,\ldots,a_n)$ is positive.
This condition was introduced by Coxeter (see~\cite{Cox,CoCo}),
and it is usually assumed in the literature on friezes; see~\cite{Sop}.
We will discuss the condition of total positivity in more details
in Appendix.

(b)
A frieze pattern can be viewed as the ``matrix'' of a
Sturm-Liouville operator~(\ref{DEqEq}) acting on the infinite-dimensional space
of sequences of numbers.
This point of view relates friezes to many different areas of mathematics.
In particular, it allows one to apply the tools
of linear algebra; see~\cite{SVRS},
and is useful for the spectral theory of linear difference operators; see~\cite{Kri}.
\end{rem}

%%%%%%%%%%%%%%%%%%%%
%%%%%%%%%%%%%%%%%%%%
\section{Towards $3d$-dissections of elements of $\PSL(2,\Z)$}\label{GeneratSec}
%%%%%%%%%%%%%%%%%%%%
%%%%%%%%%%%%%%%%%%%%
In this section we work with the group 
$\PSL(2,\Z)=\SL(2,\Z)/\left\langle\pm\Id\right\rangle$,
called the modular group.
Our goal is to define the notions of quiddity
and $3d$-dissection associated with an element of $\PSL(2,\Z)$.
The main statement of this section is formulated as conjecture, 
we hope to develop the subject elsewhere.

The notions of quiddity
and of $3d$-dissection
of an element of $\PSL(2,\Z)$ deserve a further study,
and need to be better understood.
In particular, it would be interesting to understand their relations
with the Farey graph and the hyperbolic plane.
This could eventually provide a proof of the conjecture.

%%%%%%%%%%%%%%%%%%%%
\subsection{The generators of $\PSL(2,\Z)$}
%%%%%%%%%%%%%%%%%%%%
It is a classical fact that the group 
$\PSL(2,\Z)$ can be generated by two elements, say $S$ and $L$, satisfying
$$
S^2=1,
\qquad
L^3=1,
$$
and with no other relations.
More formally, $\PSL(2,\Z)$
is isomorphic to the free product of two cyclic groups
$\Z/2\Z*\Z/3\Z$.

A possible choice of the generators is given by the following two matrices
that, abusing the notation, will also be denoted by~$S$ and~$L$:
$$
S=\left(
\begin{array}{cc}
0&1\\[4pt]
-1&0
\end{array}
\right),
\qquad
L=
\left(
\begin{array}{cc}
1&-1\\[4pt]
1&0
\end{array}
\right).
$$
These are generators of $\SL(2,\Z)$, and of $\PSL(2,\Z)$, modulo the center.
The matrices~$S$ and~$L$ are a square root and a cubic root of $-\Id$, respectively.

Another choice of generators which is often used is
$S$ and $T:=LS$, so that
$$
T=\left(
\begin{array}{cc}
1&1\\[4pt]
0&1
\end{array}
\right)
$$
the ``transvection matrix''.
Note however, that $S$ and $T$ are not free generators.

%%%%%%%%%%%%%%%%%%%%
\subsection{Reduced positive decomposition}
%%%%%%%%%%%%%%%%%%%%
 Every element of $\SL(2,\Z)$
can be written in the form~(\ref{SLEq}), for some
$n$-tuple of positive integers  $(a_1,\ldots,a_n)$.
This follows from the simple observation
(already mentioned in the introduction) that $S=M_5(1,1,2,1,1)$,
see~(\ref{RelSL}),
while the second generator~$L$ of $\SL(2,\Z)$ is already in this form.

Furthermore,
for an element of $\PSL(2,\Z)$ one can choose a canonical, 
or {\it reduced} presentation in this form.

\begin{defn}
An $n$-tuple of positive integers $(a_1,\ldots,a_n)$ is called {\it reduced} if it does not contain
subsequences $a_i,\,1\,,a_{i+2}$ with $a_i,a_{i+2}>1$, and $a_i,\,1,\,1\,,a_{i+3}$
with arbitrary $a_i,a_{i+3}$.
\end{defn}

Every $n$-tuple can be brought into reduced form by a sequence of operations inverse 
to the surgery operations~(\ref{FirstSur}) and~(\ref{SecSur}).
The matrix $M_n(a_1,\ldots,a_n)$ can only change its sign under these operations.
A reduced $n$-tuple can only have one or two $1$'s in the beginning or in the end.

We omit here a straightforward but tedious proof of the following
{\it uniqueness statement}: for every element $A\in\SL(2,\Z)$
there exists a unique reduced $n$-tuple of positive integers $(a_1,\ldots,a_n)$
such that $A=M_n(a_1,\ldots,a_n)$.
Roughly speaking, this uniqueness means that the operations~(\ref{FirstSur}) and~(\ref{SecSur})
commute.

%%%%%%%%%%%%%%%%%%%%
\subsection{The quiddity and $3d$-dissection of an element $A\in\PSL(2,\Z)$}
%%%%%%%%%%%%%%%%%%%%
Given an element $A\in\PSL(2,\Z)$,
we suggest the following construction.

Writing $A$ and $A^{-1}$ in the reduced form~(\ref{SLEq})
$$
A=M_k(a_1,\ldots,a_k),
\qquad
A^{-1}=M_\ell(a'_1,\ldots,a'_\ell),
$$
one obtains a $(k+\ell)$-tuple of positive integers
$(a_1,\ldots,a_k,\,a'_1,\ldots,a'_\ell)$, that we call the {\it quiddity of} $A$.

Furthermore,
taking into account the fact that 
$$
M_{k+\ell}(a_1,\ldots,a_k,\,a'_1,\ldots,a'_\ell)=
M_k(a_1,\ldots,a_k)\,M_\ell(a'_1,\ldots,a'_\ell)=\pm\Id,
$$
by Theorem~\ref{SecondMainThm},
this is a quiddity of some
$3d$-dissection.

\begin{conj}
\label{TheCon}
Every element $\PSL(2,\Z)$ corresponds to a unique $3d$-dissection.
\end{conj}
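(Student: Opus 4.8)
The plan is to establish a well-defined bijection between elements of $\PSL(2,\Z)$ and (equivalence classes of) $3d$-dissections, using the constructive assignment already sketched before the conjecture. By the uniqueness of the reduced decomposition (stated immediately above the conjecture), each $A\in\PSL(2,\Z)$ determines unique reduced tuples $(a_1,\ldots,a_k)$ and $(a'_1,\ldots,a'_\ell)$ presenting $A$ and $A^{-1}$; concatenating them yields a quiddity of a solution of Problem~I or~II, and Theorem~\ref{SecondMainThm}(i) guarantees this is the quiddity of some $3d$-dissection. The issue flagged in Section~\ref{NUSec} is that a quiddity does not pin down a unique $3d$-dissection, so the content of the conjecture must be that the \emph{specific} concatenation construction, as opposed to the bare quiddity, singles out a canonical dissection. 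First I would make precise what ``corresponds to'' means: I expect the right formulation is that the surgery history producing the concatenated tuple from $(1,1,1)$ (via Theorem~\ref{TheMainThm}) is canonical, and that this history reconstructs a definite dissection.

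The first main step is to show the assignment is well-defined, i.e.\ independent of the ambiguities in the construction. Here one must check that the distinguished basepoint where the two reduced words are glued (the junction between $A$ and $A^{-1}$) breaks the cyclic symmetry of the quiddity in a canonical way, so that the Klimenko-type non-uniqueness of Section~\ref{NUSec} is resolved. The key observation I would exploit is the parenthetical remark after the reduced-decomposition discussion: the surgery operations~(\ref{FirstSur}) and~(\ref{SecSur}) commute, so the sequence of operations reducing the concatenated tuple to $(1,1,1)$ is essentially unique up to reordering. Since Proposition~\ref{ContOpProp} shows that the multiset of sub-polygons is determined by the numbers $S$ and $R$ of the two operation types, and these depend only on the solution, the combinatorial type of the dissection is forced; what remains is to show the \emph{placement} of diagonals is also forced by the marked junction vertex.

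The second step is injectivity: distinct elements $A,B\in\PSL(2,\Z)$ must give genuinely different marked dissections. This should follow by running the construction backwards—from a dissection with a marked ``junction'' one reads off a cyclic quiddity, cuts it at the junction into two reduced half-words, and recovers $A=M_k(a_1,\ldots,a_k)$ uniquely by the uniqueness statement—so that the map has a well-defined inverse on its image. Surjectivity onto the class of ``admissible'' marked dissections would be handled symmetrically, by verifying that every such dissection arises from a valid pair of reduced words whose product is $\pm\Id$.

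The hard part, I expect, will be pinning down exactly which dissections are admissible and proving the junction is truly canonical, because the non-uniqueness phenomenon of Section~\ref{NUSec} means that ``a $3d$-dissection'' cannot be taken naively as an unmarked combinatorial object. One must either enrich the notion of $3d$-dissection with the distinguished junction edge (or vertex) coming from the factorization $A\cdot A^{-1}$, or prove that this extra data is canonically recoverable from the geometry. Clarifying this, and in particular relating the junction to the Farey graph and hyperbolic geometry as the authors suggest, is the crux; this is presumably why the statement is left as a conjecture. A complete proof would most plausibly proceed by interpreting the concatenated quiddity as a closed path in the Farey tessellation and showing that the reduced factorization corresponds to a canonical geodesic decomposition, thereby furnishing both the uniqueness of the dissection and its intrinsic geometric meaning.
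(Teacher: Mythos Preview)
The paper does not prove this statement: it is explicitly labelled a \emph{conjecture}, and the surrounding text says ``we hope to develop the subject elsewhere'' and that relating the construction to the Farey graph ``could eventually provide a proof of the conjecture.'' So there is no paper proof to compare against; you are proposing an approach to an open problem.

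Your proposal has a genuine gap, and you partly diagnose it yourself. The paper spells out exactly what the conjecture asserts: ``this non-uniqueness phenomenon never occurs for $3d$-dissections associated to elements of $\PSL(2,\Z)$.'' In other words, the claim is that for the particular quiddities $(a_1,\ldots,a_k,a'_1,\ldots,a'_\ell)$ arising from reduced decompositions of $A$ and $A^{-1}$, the quiddity alone already determines an \emph{unmarked} $3d$-dissection. Your strategy of using the ``junction'' between the two reduced words as extra marking data does not address this: enriching the dissection with a distinguished vertex or edge changes the statement (and largely trivialises it, since a marked quiddity plus a canonical reduction order would indeed pin things down), while ``proving that this extra data is canonically recoverable from the geometry'' is essentially the conjecture restated. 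You have identified the dichotomy correctly but not resolved it.

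There is also a specific overreach in your first step. The commutation of the operations~(\ref{FirstSur}) and~(\ref{SecSur}), together with Proposition~\ref{ContOpProp}, tells you that the multiset $\{N_d\}$ of sub-polygon sizes is determined by the solution. But Klimenko's example in Section~\ref{NUSec} has the \emph{same} multiset of sub-polygons (two hexagons, four triangles) as the centrally symmetric dissection of Example~\ref{FirstEx}, arranged differently. So knowing $S$ and $R$, or even the full surgery history up to reordering, does not by itself force the placement of diagonals; something specific to the reduced-word quiddities must rule out the alternative placements, and that is precisely what remains to be found.
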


Recall that a quiddity does not necessarily determine a $3d$-dissection
(cf. Section~\ref{NUSec}).
The above conjecture means that this non-uniqueness phenomenon never
occurs for $3d$-dissections associated to elements of $\PSL(2,\Z)$.

A consequence of the above conjecture is that
very element $\PSL(2,\Z)$ has some
index, or ``rotation number'', see Section~\ref{RotSec}.

%%%%%%%%%%%%%%%%%%%%
\subsection{Examples}
%%%%%%%%%%%%%%%%%%%%
Let us give a few examples.

(a)
As follows from~(\ref{RelSL}),
the matrix~$S$ corresponds to
the quiddity of the hexagonal dissection of a decagon:
$$
\xymatrix @!0 @R=0.32cm @C=0.45cm
 {
&&&{2}\ar@{-}[dddddddd]\ar@{-}[lld]\ar@{-}[rrd]&
\\
&{1}\ar@{-}[ldd]&&&& {1}\ar@{-}[rdd]\\
\\
{1}\ar@{-}[dd]&&&&&&{1}\ar@{-}[dd]\\
\\
1\ar@{-}[rdd]&&&&&&1\ar@{-}[ldd]\\
\\
&1\ar@{-}[rrd]&&&& 1\ar@{-}[lld]\\
&&&2&
}
$$
The index is~$\frac{3}{2}$.

(b)
For the matrix $T$
one has $T=M_3(2,1,1)$ (up to a sign) and $T^{-1}=M_4(1,1,2,1)$.
This leads to the dissection of a heptagon:
$$
\xymatrix @!0 @R=0.35cm @C=0.35cm
{
&&&1\ar@{-}[rrd]\ar@{-}[lld]
\\
&2\ar@{-}[ldd]\ar@{-}[rrrr]&&&& 2\ar@{-}[rdd]&\\
\\
1\ar@{-}[rdd]&&&&&& 1\ar@{-}[ldd]\\
\\
&1\ar@{-}[rrrr]&&&&1
}
$$
The index is~$1$.

(c) 
Consider the following elements
$$
A=\left(
\begin{array}{cc}
2&1\\[4pt]
1&1
\end{array}
\right),
\qquad
B=\left(
\begin{array}{cc}
5&2\\[4pt]
2&1
\end{array}
\right)
$$ 
known as {\it Cohn matrices}.
These matrices play an important role in the theory of Markov numbers; see~\cite{Aig}.
One has the following presentations:
$$
A=M_4(2,2,1,1),
\quad
B=M_5(3,2,2,1,1),
\qquad
A^{-1}=M_4(1,1,3,1),
\quad
B^{-1}=M_5(1,1,4,2,1).
$$
The corresponding quiddities are those of the dissected octagon and decagon:
$$
\xymatrix @!0 @R=0.30cm @C=0.5cm
 {
&2\ar@{-}[ldd]\ar@{-}[rrrdd]\ar@{-}[rr]&& 1\ar@{-}[rdd]\\
\\
2\ar@{-}[dd]\ar@{-}[rrrr]&&&&3\ar@{-}[dd]\\
\\
1\ar@{-}[rdd]&&&&1\ar@{-}[ldd]\\
\\
&1\ar@{-}[rr]&& 1
}
\qquad
\qquad
 \xymatrix @!0 @R=0.32cm @C=0.45cm
 {
&&&4\ar@{-}[lld]\ar@{-}[dddddddd]\ar@{-}[rrd]&
\\
&2\ar@{-}[ldd]\ar@{-}[ldddd]&&&& 1\ar@{-}[rdd]\\
\\
1\ar@{-}[dd]&&&&&&1\ar@{-}[dd]\\
\\
3\ar@{-}[rrruuuuu]\ar@{-}[rdd]&&&&&&1\ar@{-}[ldd]\\
\\
&2\ar@{-}[rrd]\ar@{-}[rruuuuuuu]&&&& 1\ar@{-}[lld]\\
&&&2&
}
$$
The index of both elements, $A$ and $B$, is~$1$.

%%%%%%%%%%%%%%%%%%%%
%%%%%%%%%%%%%%%%%%%%
\section{Appendix: Conway-Coxeter quiddities and Farey sequences}\label{MotSec}
%%%%%%%%%%%%%%%%%%%%
%%%%%%%%%%%%%%%%%%%%

This section is an overview and does not contain new results.
We describe the Conway-Coxeter theorem,
formulated in terms of matrices $M_n(a_1,\ldots,a_n)$, and a similar result
in the case of Problem~III, obtained in~\cite{CoOv}.
We also briefly discuss the relation to Farey sequences.

In the seminal paper~\cite{CoCo}, Conway and Coxeter classified
solutions of Problem~II\footnote{Conway and Coxeter worked with
so-called frieze patterns (see Section~\ref{CoCoSec}), 
but the equivalence of their result to the classification of totally positive
solutions of Problem~II is a simple observation; see~\cite{BR,SVRS}.}
that satisfy a certain condition of
total positivity.
These are precisely the solutions obtained from the initial
solution $(a_1,a_2,a_3)=(1,1,1)$ by a sequence of operations~(\ref{FirstSur}).
Their classification of totally positive solutions beautifully relates Problem~II
to such classical subjects as triangulations of $n$-gons.
Furthermore, the close relation of the 
topic to Farey sequences was already mentioned in~\cite{Cox}.
It turns out that the Conway-Coxeter theorem implies some results of~\cite{HS}
about the index of a Farey sequence.

%%%%%%%%%%%%%%%%%%%%
\subsection{Total positivity}
%%%%%%%%%%%%%%%%%%%%

The class of totally positive solutions of Problem~II can be defined
in several equivalent ways.
Coxeter~\cite{Cox} (and Conway and Coxeter~\cite{CoCo}) 
assumed that all the entries of the corresponding frieze are positive. 

Another simple definition is based on the properties of solutions of the Sturm-Liouville equation.

\begin{defn}
\label{DefO}
A solution $(a_1,\ldots,a_n)$ of Problem~II is called totally positive if
there exists a solution $(V_i)_{i\in\Z}$ of the equation~(\ref{DEqEq})
that  does not change its sign on the interval $[1,\ldots,n]$, i.e.,
the sequence of~$n$ numbers $(V_1,V_2,\ldots,V_{n})$
is either positive, or negative.
\end{defn}

In the context of Sturm oscillation theory,
this case is often called ``non-oscillating'', or ``disconjugate''. 
The index of the corresponding broken line is equal to~$\frac{1}{2}$, see Section~\ref{RotSec}.

\begin{rem}
Note that since $M_n(a_1,\ldots,a_n)=-\Id$, every solution is $n$-anti-periodic,
so that it must change sign on the interval $[1,n+1]$.
\end{rem}

Let us give an equivalent combinatorial definition.
Consider the following tridiagonal $i\times{}i$-determinant
$$
K_i(a_1,\ldots,a_{i})=
\left|
\begin{array}{cccccc}
a_1&1&&&\\[4pt]
1&a_{2}&1&&\\[4pt]
&\ddots&\ddots&\!\!\ddots&\\[4pt]
&&1&a_{i-1}&\!\!\!\!\!1\\[4pt]
&&&\!\!\!\!\!1&\!\!\!\!a_{i}
\end{array}
\right|.
$$
This polynomial is nothing but the celebrated {\it continuant},
already known by Euler, and considered by many authors.
It was proved by Coxeter~\cite{Cox}
that the entries or a frieze pattern can be calculated as continuants
of the entries of the second row.

It is also well-known, see, e.g.,~\cite{BR}, (and can be easily checked directly) that the
entries of the $2\times2$ matrix~(\ref{SLEq}) can be 
explicitly calculated in terms of these determinants as follows:
$$
M_n(a_1,\ldots,a_n)=
\left(
\begin{array}{cc}
K_n(a_1,\ldots,a_{n})&-K_{n-1}(a_2,\ldots,a_{n})\\[6pt]
K_{n-1}(a_1,\ldots,a_{n-1})&-K_{n-2}(a_2,\ldots,a_{n-1})
\end{array}
\right).
$$
The condition $M_n(a_1,\ldots,a_n)=-\Id$
implies that 
$$
\begin{array}{rcl}
K_{n}(a_i,\ldots,a_{i+n-1})&=&-1,\\[4pt]
K_{n-1}(a_i,\ldots,a_{i+n-2})&=&0,\\[4pt]
K_{n-2}(a_i,\ldots,a_{i+n-3})&=&1,
\end{array}
$$
for all~$i$. 

The following definition is equivalent to Definition~\ref{DefO}.
\begin{defn}
A solution $(a_1,\ldots,a_n)$ of Problem~II is {\it totally positive} if
$$
K_{j+1}(a_i,\ldots,a_{i+j})>0
$$
for all $j\leq{}n-3$ and all $i$.
Note that we use the cyclic ordering of the $a_i$.
\end{defn}

%%%%%%%%%%%%%%%%%%%%
\subsection{Triangulated $n$-gons}
%%%%%%%%%%%%%%%%%%%%

The Conway-Coxeter result states that
totally positive solutions of Problem~II 
are in one-to-one correspondence
with triangulations of $n$-gons.

Given a triangulation of an $n$-gon,
let~$a_i$ be the number of triangles adjacent to the $i^\thup$ vertex.
This yields an $n$-tuple of positive integers, $(a_1,\ldots,a_n)$.
Conway and Coxeter called an $n$-tuple obtained from such a triangulation
 a quiddity.

\begin{thmc} (see~\cite{CoCo}).
Any quiddity of a triangulation is
a totally positive solution of Problem~II,
and every totally positive solution of Problem~II arises in this way.
\end{thmc}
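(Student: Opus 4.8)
The plan is to derive the statement entirely from the machinery already in place, reducing it to the single numerical invariant $R$, the number of surgery operations~(\ref{SecSur}) needed to build the solution. Everything rests on the chain of equivalences: for a $3d$-dissection, being a triangulation $\iff R=0 \iff$ the associated solution of Problem~II is totally positive. The first equivalence is essentially bookkeeping, the second is the only place where Sturm theory genuinely enters.

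For the forward direction, note that a triangulation of an $n$-gon is exactly a $3d$-dissection all of whose sub-polygons are triangles. By Theorem~\ref{SecondMainThm}(i) its quiddity solves Problem~I or~II, and since the number of sub-polygons with an even number of vertices is $0$, the parity criterion recorded after Theorem~\ref{SecondMainThm} places it in Problem~II. Moreover Proposition~\ref{ContOpProp} gives $R=\sum_{d}(d-1)N_d$, which vanishes precisely when $N_d=0$ for every $d\ge 2$, that is, precisely for triangulations. Hence the quiddity of a triangulation is a solution of Problem~II with $R=0$, and so (by the equivalence below) totally positive.

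The backward direction runs the equivalences in reverse. A totally positive solution of Problem~II is, by Theorem~\ref{SecondMainThm}(i), the quiddity of some $3d$-dissection $D$; by the equivalence below it has $R=0$, so formula~(\ref{RfromD}) forces every sub-polygon of $D$ to be a triangle, i.e., $D$ is a triangulation whose quiddity is the given solution. To obtain the full one-to-one correspondence I would add that, in contrast to general $3d$-dissections (cf. the non-uniqueness discussion), a triangulation is recovered from its quiddity, so the assignment triangulation $\mapsto$ quiddity is injective.

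The real content, and the step I expect to be the main obstacle, is the equivalence $R=0\iff$ total positivity. By Proposition~\ref{BrPr} the index of the broken line $(V_i)$ attached to a solution of Problem~II equals $\tfrac12(R+1)$, so $R=0$ is equivalent to that index being $\tfrac12$. I would then read Definition~\ref{DefO} geometrically: a solution of~(\ref{DEqEq}) of constant sign on $[1,\ldots,n]$ is exactly a linear functional $\ell$ on $\R^2$ that is positive on all of $V_1,\ldots,V_n$, i.e., an open half-plane through the origin containing these vertices. Since the broken line rotates monotonically about the origin (the Wronskian being constant) and is anti-periodic, $V_{i+n}=-V_i$, the vertices $V_1,\ldots,V_n$ lie in a common open half-plane precisely when the net rotation over one anti-period is the minimal half-turn, that is, precisely when the index equals $\tfrac12$. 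The two directions can also be seen directly on the surgeries: operation~(\ref{FirstSur}) inserts $V_i+V_{i+1}$, which lies in any open half-plane already containing $V_i$ and $V_{i+1}$, so starting from $(1,1,1)$ the half-plane property—hence total positivity—persists as long as only~(\ref{FirstSur}) is used; whereas~(\ref{SecSur}) inserts $-V_i,-V_{i+1}$, which leave that half-plane and push the index up by $\tfrac12$. Making the monotone-rotation argument fully rigorous, and checking that these are the only possibilities, is where the actual work lies.
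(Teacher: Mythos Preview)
The paper does not actually prove this theorem: it is stated as a citation of Conway and Coxeter~\cite{CoCo}, and the text immediately following it only points to~\cite{CH,BR,Hen} for direct proofs. So there is no ``paper's own proof'' to compare against in the strict sense. What the paper does provide is the informal remark in the subsection on non-osculating solutions that $R=0$ forces the $3d$-dissection to be a triangulation and that the non-osculating (index $\tfrac12$) solutions coincide with the totally positive ones; your proposal is precisely a fleshed-out version of that remark.

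Your route is genuinely different from the classical proofs in~\cite{CoCo,CH,BR,Hen}, which work directly with the continuants $K_j(a_i,\ldots,a_{i+j-1})$ (equivalently, the frieze entries) and show positivity by induction on the triangulation. You instead funnel everything through the single invariant $R$ and the geometric index of the broken line, using Theorem~\ref{SecondMainThm}, Proposition~\ref{ContOpProp}, and Proposition~\ref{BrPr}. This is elegant and economical, since it recycles the paper's machinery rather than redoing the continuant computations; the price is that the crucial step ``index $=\tfrac12$ $\iff$ totally positive in the sense of Definition~\ref{DefO}'' is only argued heuristically. Your half-plane reading of Definition~\ref{DefO} is correct (a solution of~(\ref{DEqEq}) is a linear functional on the plane of the broken line), and the monotone-rotation/anti-periodicity argument does go through, but it is exactly this step that the paper itself leaves as an assertion. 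If you want a self-contained proof, that is the lemma you must write out carefully; the surgery-based argument you sketch (operation~(\ref{FirstSur}) preserves any open half-plane containing the existing vertices, while~(\ref{SecSur}) destroys it) is probably the cleanest way to do so.
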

 
A direct proof of the Conway-Coxeter theorem in terms of $2\times2$-matrices 
is given in~\cite{CH,BR}.
For a simple direct proof, see also~\cite{Hen}.

\begin{ex}
For $n=5$, the triangulation of the pentagon
$$
\xymatrix @!0 @R=0.50cm @C=0.5cm
{
&&3\ar@{-}[rrd]\ar@{-}[lld]\ar@{-}[lddd]\ar@{-}[rddd]&
\\
1\ar@{-}[rdd]&&&& 1\ar@{-}[ldd]\\
\\
&2\ar@{-}[rr]&& 2
}
$$
generates a solution $(a_1, a_2,a_3, a_4,a_5)=(1, 3, 1, 2, 2)$ of Problem~II.
All other solutions for $n=5$ are obtained by cyclic permutations of this one.
 \end{ex}

%%%%%%%%%%%%%%%%%%%%
\subsection{Gluing triangles}
%%%%%%%%%%%%%%%%%%%%
Obviously, every triangulation of an $n$-gon can be obtained from a triangle by
adding new exterior triangles.

\begin{ex}
Gluing a triangle to the above triangulated pentagon
$$
\xymatrix @!0 @R=0.50cm @C=0.5cm
{
&&3\ar@{-}[rrd]\ar@{-}[lld]\ar@{-}[lddd]\ar@{-}[rddd]&
\\
1\ar@{-}[rdd]&&&& 11\ar@<3pt>@{-}[ldd]\ar@{-}[ldd]\\
\\
&2\ar@{-}[rr]&& 21\ar@{-}[rr]&& 1\ar@{-}[luu]
}
\qquad
\simeq
\qquad
\xymatrix @!0 @R=0.50cm @C=0.5cm
{
&&3\ar@{-}[rrd]\ar@{-}[lld]\ar@{-}[llddd]\ar@{-}[dddd]&
\\
1\ar@{-}[dd]&&&& 2\ar@{-}[llddd]\ar@{-}[dd]\\
\\
2\ar@{-}[rrd]&&&& \,1,\ar@{-}[lld]\\
&&3
}
$$
one obtains the solution $(1, 3, 2,1, 3, 2)=(1, 3, 1+1,\,1,\, 2+1, 2)$ of Problem~II,
for $n=6$.
 \end{ex}

An operation~(\ref{FirstSur}) applied to a quiddity consists in
gluing a triangle to a triangulated $n$-gon, so that 
the Conway-Coxeter theorem implies the following statement (see also~\cite{CH}, Theorem 5.5).

\begin{cor}
Every totally positive solution of Problem~II can be obtained from
the initial solution $(a_1,a_2,a_3)=(1,1,1)$ by a sequence of operations~(\ref{FirstSur}).
Conversely, every sequence of operations~(\ref{FirstSur}) applied to this initial solution
is a totally positive solution of Problem~II.
\end{cor}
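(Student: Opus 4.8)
The plan is to reduce both implications to the Conway--Coxeter theorem stated above, which identifies totally positive solutions of Problem~II with quiddities of triangulations of $n$-gons, together with the elementary structural fact that every triangulation is assembled from a single triangle by successively gluing exterior triangles. The whole point is that the discussion preceding the statement already matches one operation~(\ref{FirstSur}) with one such gluing, so the corollary should fall out by a straightforward induction in each direction.

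For the converse direction I would induct on the number of operations~(\ref{FirstSur}) applied. The base case is the quiddity $(1,1,1)$, which is the quiddity of the (trivially triangulated) triangle and hence a totally positive solution of Problem~II. For the inductive step, suppose a sequence of operations has produced the quiddity of a triangulated $n$-gon; applying one more operation~(\ref{FirstSur}) amounts, as noted above, to gluing an exterior triangle along an edge $(i,i+1)$, so the result is again a triangulation, now of an $(n+1)$-gon, whose quiddity is a totally positive solution of Problem~II by the Conway--Coxeter theorem. (Alternatively, equation~(\ref{Casea}) shows that~(\ref{FirstSur}) leaves $M_n$ unchanged, so the matrix remains $-\Id$ and the solution remains one of Problem~II, with total positivity supplied by the triangulation structure.)

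For the forward direction I would start from a totally positive solution $(a_1,\ldots,a_n)$ and use the Conway--Coxeter theorem to realize it as the quiddity of a triangulation $T$ of an $n$-gon. When $n\geq 4$, the triangulation $T$ contains an \emph{ear}, i.e.\ an exterior triangle attached to the rest along a single edge; its apex carries the quiddity value $1$, and removing it yields a triangulation of an $(n-1)$-gon whose quiddity is obtained from $(a_1,\ldots,a_n)$ by the inverse of an operation~(\ref{FirstSur}). Since each such step strictly decreases the number of vertices, after finitely many reductions we reach the base triangle with quiddity $(1,1,1)$. Reading this reduction backwards exhibits $(a_1,\ldots,a_n)$ as the image of $(1,1,1)$ under a sequence of operations~(\ref{FirstSur}).

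The only point that genuinely needs to be granted is the existence of a removable ear at each stage of the reduction, i.e.\ that every triangulation of a polygon with at least four vertices admits an exterior triangle whose removal again leaves a triangulation; this is the classical ``two ears'' fact for polygon triangulations. Once this is in hand, everything else is formal: both directions follow from the dictionary between triangle-gluing and the operation~(\ref{FirstSur}) and from the Conway--Coxeter correspondence. I therefore expect no serious obstacle beyond correctly invoking this ear-existence statement and checking that the inverse reduction on quiddities is exactly an inverse of~(\ref{FirstSur}).
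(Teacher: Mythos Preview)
Your proposal is correct and follows essentially the same approach as the paper. The paper's argument is the one-line observation preceding the corollary: operation~(\ref{FirstSur}) corresponds to gluing an exterior triangle, and ``obviously, every triangulation of an $n$-gon can be obtained from a triangle by adding new exterior triangles''; your ear-removal induction simply unpacks this last clause, and your converse direction matches the paper's implicit reasoning exactly.
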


For a clear and detailed discussion; see~\cite{BR}.

%%%%%%%%%%%%%%%%%%%%
\subsection{Indices of Farey sequences as Conway-Coxeter quiddities}
%%%%%%%%%%%%%%%%%%%%

Relation to Farey sequences and negative continued fractions was already mentioned
by Coxeter~\cite{Cox} (see also~\cite{SVS}).

Rational numbers in $[0,1]$ whose denominator
does not exceed $N$ written in a form of irreducible fractions
form the {\it Farey sequence}
of order $N$.
Elements of the Farey sequence, $v_1=\frac{a_1}{b_1}$ and $v_2=\frac{a_2}{b_2}$,
are joined by an edge if and only if
$$
|a_1b_2-a_2b_1|=1.
$$
This leads to the classical notion of {\it Farey graph}.
The Farey graph is often embedded into the hyperbolic plane,
the edges being realized as geodesics joining rational points on the ideal boundary.

  \begin{figure}[!h]
\begin{center}
\includegraphics[width=8cm]{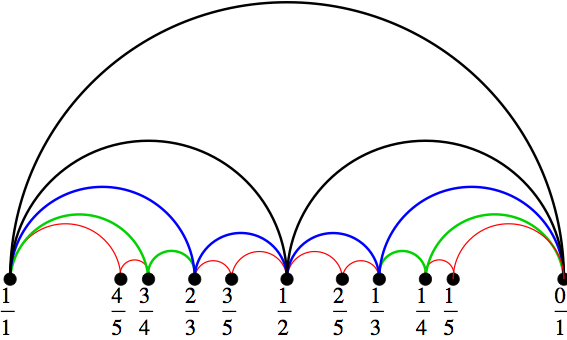}
\end{center}
\caption{The Farey sequence of order $5$ and the triangulated hendecagon.}
\label{SerFig}
\end{figure}

The main properties of Farey sequences can be found in~\cite{HaWr}.
A simple but important property is that
every Farey sequence forms a triangulated polygon in the Farey graph.
A Conway-Coxeter quiddity is then precisely the index of a Farey sequence,
defined in~\cite{HS}.

The Conway-Coxeter theorem implies the following.

\begin{cor}
A solution $(a_1,\ldots,a_n)$ of Problem~II is totally positive
if and only if
$$
a_1+a_2+\cdots+a_n
=
3n-6.
$$
\end{cor}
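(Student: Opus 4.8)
The plan is to deduce the corollary from the sum formula of Corollary~\ref{SumCor}, together with the identification of totally positive solutions with those requiring no operation of the second type.

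First I would record the numerical input. By Corollary~\ref{SumCor}(i), any solution $(a_1,\ldots,a_n)$ of Problem~II obtained from $(1,1,1)$ by $S$ operations~(\ref{FirstSur}) and $R$ operations~(\ref{SecSur}) satisfies $a_1+a_2+\cdots+a_n=3n-6R-6$. Since $R$ is a well-defined invariant of the solution (independent of the chosen sequence of operations) and $R\geq 0$, this gives $a_1+\cdots+a_n\leq 3n-6$, with equality if and only if $R=0$.

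Next I would identify the condition $R=0$ with total positivity. By the Conway--Coxeter theorem, equivalently by the earlier corollary asserting that the totally positive solutions of Problem~II are exactly those produced from $(1,1,1)$ by operations~(\ref{FirstSur}) alone, total positivity is precisely the condition $R=0$; the same equivalence also follows from Proposition~\ref{BrPr}, since $R=0$ is exactly the non-osculating case, where the index equals $\frac{1}{2}$. Combining this with the previous step, the equality $a_1+\cdots+a_n=3n-6$ holds if and only if $R=0$, which holds if and only if $(a_1,\ldots,a_n)$ is totally positive, as claimed.

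It is worth noting that the forward implication has a direct combinatorial proof: a totally positive solution is the quiddity of a triangulation of an $n$-gon, every triangulation of an $n$-gon has exactly $n-2$ triangles, and summing the quiddity counts each incident pair consisting of a vertex and an adjacent triangle, so $a_1+\cdots+a_n=3(n-2)=3n-6$ because each triangle contributes its three vertices. The only genuine point is the identification of total positivity with $R=0$; within the present paper this is not an obstacle, as it is supplied both by the Conway--Coxeter correspondence and by Proposition~\ref{BrPr}. The real content of the corollary is therefore the single substitution $R=0$ into $3n-6R-6$.
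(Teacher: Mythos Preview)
Your proof is correct. The paper's own argument is the short combinatorial one you mention at the end: a totally positive solution is, by the Conway--Coxeter theorem, the quiddity of a triangulation, a triangulation of an $n$-gon has $n-2$ triangles, and each triangle contributes $3$ to the sum, giving $3n-6$. The paper leaves the converse implicit.

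Your main line of argument is different: you invoke Corollary~\ref{SumCor} to get $a_1+\cdots+a_n=3n-6R-6$ and then identify total positivity with $R=0$ via the Conway--Coxeter corollary (or Proposition~\ref{BrPr}). This has the advantage of giving both directions at once and making transparent that the ``only if'' part is where the content lies. The paper's direct counting is more elementary for the forward implication but, as written, does not spell out why $a_1+\cdots+a_n=3n-6$ forces the dissection to be a triangulation; your route via the invariant $R$ closes that gap cleanly.
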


Indeed, the total number of triangles in a triangulation is~$n-2$,
and each triangle has three angles that contribute to a quiddity.

\begin{rem}
The above formula is equivalent to Theorem~1 of~\cite{HS}.
Moreover, it holds
not only for the complete Farey sequence, but also for 
an arbitrary {\it path in the Farey graph}.
Consider the Farey sequence of order $5$
presented in Figure~\ref{SerFig}. It has many different shorter paths,
for instance, $\left\{\frac{1}{1},\frac{2}{3},\frac{3}{5},\frac{1}{2},\frac{1}{3},\frac{0}{1}\right\}$.
\end{rem}

%%%%%%%%%%%%%%%%%%%%
\subsection{Totally positive solutions of Problem~III}
%%%%%%%%%%%%%%%%%%%%

A solution $(a_1,\ldots,a_n)$ of Problem~III is totally positive if its double
$(a_1,\ldots,a_n,a_1,\ldots,a_n)$ is a totally positive solution of Problem~II.
Every totally positive solution can be obtained from one of the solutions
$(a_1,a_2)=(1,2)$, or $(2,1)$ by a sequence of operations~(\ref{FirstSur}).

The Conway-Coxeter theorem implies that
there is a one-to-one correspondence between totally positive solutions of 
Problem~III and {\it centrally symmetric} triangulations of $2n$-gons;
see also~\cite{CoOv}.

\begin{ex}
\label{DecaEx}
There exist $70$ different
centrally symmetric triangulations of the decagon, for instance
$$
 \xymatrix @!0 @R=0.32cm @C=0.45cm
 {
&&&5\ar@{-}[lllddd]\ar@{-}[lld]\ar@{-}[dddddddd]\ar@{-}[rrd]&
\\
&1\ar@{-}[ldd]&&&& 2\ar@{-}[rdd]\\
\\
2\ar@{-}[dd]&&&&&&2\ar@{-}[dd]\ar@{-}[lllddddd]\\
\\
2\ar@{-}[rrruuuuu]\ar@{-}[rdd]&&&&&&2\ar@{-}[ldd]\ar@{-}[lllddd]\\
\\
&2\ar@{-}[rrd]\ar@{-}[rruuuuuuu]&&&& 1\ar@{-}[lld]\\
&&&5\ar@{-}[rruuuuuuu]&
}
\qquad
\xymatrix @!0 @R=0.32cm @C=0.45cm
 {
&&&4\ar@{-}[lllddd]\ar@{-}[lld]\ar@{-}[dddddddd]\ar@{-}[rrd]&
\\
&1\ar@{-}[ldd]&&&& 3\ar@{-}[rdd]\ar@{-}[rdddd]\\
\\
3\ar@{-}[dd]\ar@{-}[rdddd]&&&&&&1\ar@{-}[dd]\\
\\
1\ar@{-}[rdd]&&&&&&3\ar@{-}[ldd]\ar@{-}[lllddd]\\
\\
&3\ar@{-}[rrd]\ar@{-}[rruuuuuuu]&&&& 1\ar@{-}[lld]\\
&&&4\ar@{-}[rruuuuuuu]&
}
\qquad
\xymatrix @!0 @R=0.32cm @C=0.45cm
 {
&&&4\ar@{-}[lllddd]\ar@{-}[lld]\ar@{-}[dddddddd]\ar@{-}[rrd]&
\\
&1\ar@{-}[ldd]&&&& 2\ar@{-}[rdd]\ar@{-}[rdddd]\\
\\
4\ar@{-}[dd]\ar@{-}[rdddd]&&&&&&1\ar@{-}[dd]\\
\\
1\ar@{-}[rdd]&&&&&&4\ar@{-}[ldd]\ar@{-}[lllddd]\ar@{-}[llluuuuu]\\
\\
&2\ar@{-}[rrd]&&&& 1\ar@{-}[lld]\\
&&&4\ar@{-}[llluuuuu]&
}
$$
The corresponding sequences
$
(a_1,a_2,a_3,a_4,a_5)=
(5,2,2,2,1), \,  (4,3,1,3,1), \,  (4,2,1,4,1),\ldots
$
are totally positive solutions of Problem~III.
\end{ex}

The total number of totally positive solutions of Problem~III
is given by the central binomial coefficient $\binom{2n}{n}=1, 2, 6, 20, 70, 252, 924,\ldots$

\bigbreak \noindent
{\bf Acknowledgements}.
I am grateful to Charles Conley, Vladimir Fock,
Alexey Klimenko, Ian Marshall, Sophie Morier-Genoud and Sergei Tabachnikov
for multiple stimulating and enlightening discussions.
I am also grateful to the referees for a number of useful comments and suggestions.


\begin{thebibliography}{99}

\bibitem{Aig}
M. Aigner, 
Markov's theorem and 100 years of the uniqueness conjecture. 
A mathematical journey from irrational numbers to perfect matchings. 
Springer, Cham, 2013.

\bibitem{BM}
K. Baur, R. Marsh, 
{\it Frieze patterns for punctured discs}, 
J. Algebraic Combin. {\bf 30} (2009), 349--379.

\bibitem{BPT}
K. Baur, M. Parsons, M. Tschabold, 
{\it Infinite friezes}, European J. Combin. {\bf 54} (2016), 220--237.

\bibitem{BR}
F. Bergeron, C. Reutenauer,
{\it $SL_k$-tilings of the plane},
Illinois J. Math. {\bf 54} (2010), 263--300. 

\bibitem{BHJ}
C. Bessenrodt, T. Holm, P. Jorgensen,
{\it Generalized frieze pattern determinants and higher angulations of polygons}, 
J. Combin. Theory Ser. A {\bf 123} (2014) 30--42.

\bibitem{CoOv}
C. Conley, V. Ovsienko
{\it Rotundus: triangulations, Chebyshev polynomials, and Pfaffians},
to appear in Math. Intelligencer,
arXiv:1707.09106.

\bibitem{CoCo}
J.~H.~Conway, H.~S.~M.~Coxeter, 
{\it Triangulated polygons and frieze patterns},
Math.\ Gaz.\ {\bf 57} (1973), 87--94 and 175--183.

\bibitem{Cox} 
H.~S.~M.~Coxeter. 
{\it Frieze patterns},  Acta Arith.\ {\bf 18}  (1971), 297--310.

\bibitem{CR} 
H.~S.~M.Coxeter, J.~F. Rigby, 
{\it Frieze patterns, triangulated polygons and dichromatic symmetry},
The Lighter Side of Mathematics, R.K. Guy and E.Woodrow Eds.,
MAA Spectrum. Mathematical Association of America, Washington, DC, 1994, 15--27.

\bibitem{Cun} 
M.~Cuntz, 
{\it On Wild Frieze Patterns}, Exp.\ Math.\ {\bf 26} (2017), 342--348.

\bibitem{CH} 
M. Cuntz, I. Heckenberger, I
{\it Weyl groupoids of rank two and continued fractions},
Algebra Number Theory {\bf 3} (2009), 317--340.

\bibitem{DS} 
F. Delyon and B. Souillard, 
{\it The rotation number for finite difference operators and its properties,}
Comm. Math. Phys. 89 (1983), 415--426.

\bibitem{FN} 
P. Flajolet, M. Noy, 
{\it Analytic combinatorics of non-crossing configurations}, 
Discrete Math. {\bf 204} (1999), 203--229.

\bibitem{HS} 
R. Hall, P. Shiu, 
{\it The index of a Farey sequence},
Michigan Math. J. {\bf 51} (2003), 209--223.

\bibitem{HaWr}
G. H. Hardy, E. M. Wright,  
An introduction to the theory of numbers. 
Sixth edition. Revised by D. R. Heath-Brown and J. H. Silverman. With a foreword by Andrew Wiles. 
Oxford University Press, Oxford, 2008, 621~pp.

\bibitem{Hen}
C.-S.~Henry,
{\it Coxeter friezes and triangulations of polygons},
Amer.\ Math.\ Monthly {\bf 120} (2013), 553--558.

\bibitem{HJ}
T. Holm, P. Jorgensen,
{\it A $p$-angulated generalisation of Conway and Coxeter's theorem on frieze patterns},
arXiv:1709.09861.

\bibitem{Kri} 
I. Krichever, 
{\it Commuting difference operators and the combinatorial Gale transform},
 Funct. Anal. Appl. {\bf 49} (2015), 175--188.
 
\bibitem{Sop} 
S.~Morier-Genoud, 
{\it Coxeter's frieze patterns at the crossroads of algebra, geometry and combinatorics},
Bull.\ Lond.\ Math.\ Soc.\ {\bf 47} (2015), 895--938.

\bibitem{SVRS} 
S. Morier-Genoud, V. Ovsienko, R. Schwartz, S. Tabachnikov, 
{\it Linear difference equations, frieze patterns and combinatorial Gale transform,} 
Forum Math., Sigma, Vol.  {\bf 2}  (2014), e22 (45 pages).

\bibitem{SVS1} 
S.~Morier-Genoud, V.~Ovsienko, S.~Tabachnikov, 
{\it $2$-frieze patterns and the cluster structure of the space of polygons}, 
Ann. Inst. Fourier, {\bf 62} (2012), 937--987.

\bibitem{SVS} 
S.~Morier-Genoud, V.~Ovsienko, S.~Tabachnikov, 
{\it $\SL_2(\Z)$-tilings of the torus, Coxeter-Conway friezes and Farey triangulations},
Enseign.\ Math.\ {\bf 61} (2015), 71--92. 

\bibitem{Sim} 
B. Simon, 
{\it Sturm oscillation and comparison theorems},
Sturm-Liouville theory, 29--43, Birkh\"auser, Basel, 2005.

\bibitem{Skl} 
E. K. Sklyanin, 
{\it The quantum Toda chain. Nonlinear equations in classical and quantum field theory},
Lecture Notes in Phys., {\bf226}, Springer, Berlin, 1985, 196--233, .

\bibitem{Zag} 
D. Zagier, 
{\it Nombres de classes et fractions continues}, 
Journ\'ees Arithm\'etiques de Bordeaux (Conference, Univ. Bordeaux, 1974), 
Ast\'erisque 24-25 (1975), 81--97.

\end{thebibliography}
\end{document}